\documentclass[12pt,english,reqno]{amsart}
\usepackage[latin9]{inputenc}
\setcounter{tocdepth}{1}
\usepackage{verbatim}
\usepackage{textcomp}
\usepackage{amsthm}
\usepackage{amstext}
\usepackage{amssymb}
\usepackage{graphicx}
\usepackage{esint}
\setlength\parindent{0pt}
\setlength{\parskip}{0.5em}
\date{\today}

\makeatletter
%%%%%%%%%%%%%%%%%%%%%%%%%%%%%% Textclass specific LaTeX commands.
\newcounter{cprop}[section]

%%%%%%%%%%%%%%%%%%%%%%%%%%%%%% User specified LaTeX commands.

\usepackage{amscd}\usepackage{amsthm}\usepackage[english]{babel}
\usepackage[arrow,matrix]{xy}\usepackage{cite}\usepackage{bbm}
\@ifundefined{definecolor}
 {\usepackage{color}}{}
\usepackage{MnSymbol}

\topmargin=0.1in \textwidth5.8in \textheight8.2in
\oddsidemargin=1cm
\evensidemargin=1cm

\theoremstyle{definition}
\newtheorem{thm}[cprop]{Theorem}
\newtheorem{rem}[cprop]{Remark}
\newtheorem{lemma}[cprop]{Lemma}
\newtheorem{pro}[cprop]{Proposition}

\newtheorem{rk}[cprop]{Remark}
\newtheorem{defn}[cprop]{Definition}
\newtheorem{ex}[cprop]{Example}

\newcommand{\blue}{}
\newcommand{\red}[1]{}
\numberwithin{equation}{section} 
\newcommand{\td}{\tilde}
\newcommand{\OO}{{\mathcal O}}

\newcommand{\F}{{\mathcal F}}

\newcommand{\LL}{{\mathcal L}}
\newcommand{\EE}{{\mathcal E}}
\newcommand{\E}{{\mathbb E}}

\newcommand{\w}{{\bf w}}
\newcommand{\s}{{\sigma}}

\newcommand{\bea}{\begin{eqnarray}}
\newcommand{\eea}{\end{eqnarray}}

\newcommand{\N}{\mathbb N}

\newcommand{\R}{\mathbb{R}}

\newcommand{\PP}{\mathbb{P}}
\newcommand{\FF}{\mathbb{F}}

\newcommand{\<}{\langle}
\renewcommand{\>}{\rangle}
\def\d {\triangle}
\def\t {\theta}
\def\w {\omega}
\def\0 {\emptyset}

\def\a {\alpha}
\def\l {\lambda}
\def\d {\delta}
\def\e {\varepsilon}

\def\b {\beta}

\def\s {\sigma}

%%%%%%%%%%%%%%%%%%%%%%%%%%%%%%%%%%%%%%%%%%%%%%%%%%%%%%%%%

%%%%%%%%%%%%%%%%%%%%%%%%%%%%%%%%%%%%%%%%%%%%%%%%%%%%%%

%%%%%%%%%%%%%%%%%%%%%%%%%%%%%%%%%%%%%%%%%%%%%%

\newcommand{\diam}{\mathrm{diam}}

%%%%%%%%%%%%%%%%%%%%%%%%%%%%%%

\makeatother

\usepackage{babel}

\begin{document}
\title[Synchronization by noise for order-preserving RDS]{Synchronization by noise for order-preserving random dynamical systems}

\begin{abstract}
We provide sufficient conditions for weak synchronization/{\blue stabilization} by noise for order-preserving random dynamical systems on Polish spaces. That is, under these conditions we prove the existence of a weak point attractor consisting of a single random point. This generalizes previous results in two directions: First, we do not restrict to Banach spaces and second, we do not require the partial order to be admissible nor normal. As a second main result and application we prove weak synchronization by noise for stochastic porous media equations with additive noise.
\end{abstract}

\author[F. Flandoli]{Franco Flandoli}
\address{Dipartimento di Matematica\\
Largo Bruno Pontecorvo 5\\
56127 Pisa\\
Italy}
\email{flandoli@dma.unipi.it}

\author[B. Gess]{Benjamin Gess}
\address{Max-Planck Institute for Mathematics in the Sciences \\
Inselstra\ss e 22\\
04103 Leipzig\\
Germany}
\email{bgess@mis.mpg.de}

\author[M. Scheutzow]{Michael Scheutzow}
\address{Institut f\"ur Mathematik, MA 7-5\\
Technische Universit\"at Berlin\\
10623 Berlin \\
Germany}
\email{ms@math.tu-berlin.de}

\keywords{synchronization, random dynamical system, random attractor, order-preserving RDS, stochastic differential equation, statistical equilibrium}

\subjclass[2010]{37B25; 37G35, 37H15}

\thanks{B.G. has been partially supported by the research project ``Random dynamical systems and regularization by noise for stochastic partial differential equations'' funded by the German Research Foundation.}

\maketitle

\section{Introduction}

In this work we provide sufficient conditions for (weak) synchronization by noise for strongly mixing, order-preserving random dynamical systems\footnote{For notation and background on RDS see Section \ref{sec:notation} and Appendix \ref{sec:RDS} below} (RDS) $\varphi$ on partially ordered Polish spaces $(E,d)$. Weak synchronization by noise here means that there is a weak point attractor consisting of a single random point and in this sense the random dynamics are asymptotically globally stable. In particular, in this case
\begin{equation}\label{eq:traj_conv}
  d(\varphi_t(\omega,x),\varphi_t(\omega,y))\to 0,\quad\text{ for }t\to\infty
\end{equation}
in probability, for all $x,y\in E$.

More precisely, assuming a concentration property for the corresponding invariant measure $\mu$ on intervals in $E$ (cf.\ \eqref{eq:intro_1} below), we prove the existence of a unique $\varphi$-invariant random point $a:\Omega\to E$, measurable with respect to the past $\F_0$, such that 
  $$d(\varphi_t(\omega,x),a(\t_{t}\omega))\to 0,\quad\text{ for }t\to\infty$$
in probability, for all $x\in E$. The method of proof is entirely new. Several examples illustrating the generality of this result are presented in Section \ref{sec:examples}.

As a second main result we prove weak synchronization by noise for stochastic porous media equations of the type
\begin{equation}\label{eq:SPME:_intro}
  dX_t = \left(\Delta X_t^{[m]}+X_t\right)dt+dW_t,
\end{equation}
with zero Dirichlet boundary conditions on bounded, smooth domains $\OO\subseteq \R^d$, $d\le 4$, $m > 1$ and $W$ being a trace-class Wiener process satisfying an appropriate non-degeneracy condition. Here we use the convention $u^{[m]}:=|u|^{m-1}u$. This solves a problem left open in \cite{G13}. In contrast, the attractor for the deterministic porous medium equation 
$$
  dX_t = \left(\Delta X_t^{[m]}+X_t\right)dt
$$
has infinite fractal dimension (cf.\ \cite{EZ08}). We prove that this infinite dimensional attractor collapses into a zero dimensional random attractor if sufficiently non-degenerate noise is added.

Our results on order-preserving RDS generalize those of \cite{CS04} in two main directions: First, we do not require the underlying space $E$ to be embedded in a (partially ordered) Banach space. Second, we completely remove the assumptions on the partial order to be ``admissible" and normal. More precisely, in \cite{CS04} it is required that the RDS $\varphi$ is defined on an admissible subset $E$ of a real, separable Banach space $V$. Admissibility here means, in particular, that for each compact set $K \subseteq E$ there are $a,b\in V$ such that $K\subseteq \textrm{int}_E([a,b]\cap E)$. In infinite dimensions this is a restrictive condition since intervals $[a,b]$ may have empty interior and, even worse, compact sets are not necessarily included in intervals (e.g.\ consider $L^p$ spaces). Therefore, in applications to SPDE one typically has to choose $E$ to be the set of continuous functions, thus restricting to SPDE for which spatial continuity of solutions can be shown. This often leads to stringent restrictions on the spatial dimension or to assumptions on the spatial regularity of the noise. In this paper, we replace the assumption of admissibility by a support assumption on the invariant measure $\mu$, i.e.\ we assume that for each $\e>0$ there is an interval $[f,g] \subseteq E$ such that
\begin{equation}\label{eq:intro_1}
  \mu([f,g])\ge 1-\e.
\end{equation}
The advantage is that the invariant measure $\mu$ often has support on smaller spaces than all of $E$ and thus in applications this support condition can be seen to be satisfied even though admissibility is not. \\
In order to have admissibility of a partial order, or more generally \eqref{eq:intro_1}, one wants intervals $[f,g]$ to be ``large''. On the other hand, normality of a partial order (in Banach spaces $E$ say) requires the existence of a constant $C>0$ such that $\diam([f,g])\le C\|f-g\|_E$ for all intervals $[f,g]$. Hence, in order for a partial order to be normal intervals may not be ``too large''. In this sense, admissibility (or \eqref{eq:intro_1} resp.) and normality are conflicting assumptions limiting the applicability to SPDE, which explains the relevance of removing the normality assumption. 

In particular, these generalizations are crucial in their application to weak synchronization by noise for \eqref{eq:SPME:_intro}. This was left as an open problem in \cite{G13}, since the usual partial order ``$\le$" on $E:=H^{-1}=(H_0^1)^*$ is not admissible. In addition, ergodicity for \eqref{eq:SPME:_intro} is known only in cases of non-degenerate noise, for which there is in general no hope to prove spatial continuity of solutions. Moreover, also \eqref{eq:intro_1} is unclear for the usual partial order ``$\le$". The main idea here is to introduce an alternative, non-standard partial order ``$\preceq$" on $H^{-1}$, for which \eqref{eq:intro_1} can be proven. Indeed, intervals with respect to ``$\preceq$" can be seen to be much larger than those corresponding to ``$\le$". On the downside, this causes ``$\preceq$" to be not normal (cf.\ the discussion above). In conclusion, the non-standard partial order ``$\preceq$" is neither normal nor admissible, thus requiring the full generality of our first main result.  

Let us now briefly comment on the existing literature, for more details we refer to \cite{FGS14}. Synchronization by noise for order-preserving RDS has been analyzed, for example, in \cite{AC98,CS04,C02} and was first applied to prove synchronization for stochastic reaction-diffusion systems on thin two-layer domains in \cite{CCK07}. Methods based on local stability have been introduced in \cite{B91} and large deviation techniques have been employed in  \cite{MS88,MSS94,T08}. Synchronization by noise for SPDE has been investigated, for example, in \cite{ACW83,CR04,CCLR07,G13}. For the related effect of synchronization in master-slave systems we refer to \cite{C10} and the references therein. For synchronization for discrete time RDS see \cite{H13, N14,KJR13,JK13} and the references therein. Applications of synchronization by noise are to be found, for example, in theoretical physics \cite{RSTA95,P96,PRKH02,KJR13}, climate dynamics \cite{GCS08,CSG11,C13}, neurophysiology \cite{SSV14} and numerics \cite{LPP13}.

{\blue Concerning the terminology of synchronization by noise, different and somewhat inconsistent terminology has been used in the literature. In some instances, the effect that deterministic invariant points may become stable due to the inclusion of noise has been referred to as stabilization by noise (e.g.\ \cite{ACH83,CR04,CCLR07,A02,CKS06}). In these examples, the deterministic and stochastic systems share the same deterministic invariant points. The property that each two trajectories of a noisy system converge to one another, i.e.\ \eqref{eq:traj_conv} holds, has been named synchronization by noise in several recent publications (e.g.\ \cite{H13,N14}). This property is closely related and a simple consequence of the results obtained in this work. We therefore use the notion of synchronization by noise, noting, however, that there would be good reason to refer to the effects observed here as stabilization by noise.
}

Outline of the paper: In Section \ref{sec:main_results} we prove synchronization by noise for general order-preserving RDS, in Section \ref{sec:SPME} for stochastic porous media equations. Further applications to stochastic differential inclusions and SPDE with two reflecting walls are presented in Section \ref{sec:examples}.

\subsection{Notation}\label{sec:notation}
For a set $A\subseteq E$ we let $\diam_E(A):=\sup_{a,b\in A}d(a,b)$, $A^c$ denotes its complement and $B_\d(A) := \{x\in E:\ d(x,A)=\inf_{a\in A}d(x,a) < \d\}$. For simplicity we often suppress the notation of $E$ and write $\diam(A)$ instead. A subset $X \subseteq E$ is said to be admissible, if $X$ is a Polish space in $E$ and for every compact set $K\subseteq X$ there are $a,b\in E$, $a\le b$ such that $K\subseteq \mathrm{int}_X([a,b]\cap X)$. 

We let $(\Omega,\F,\PP)$ be a probability space. For a random variable $v:\Omega\to E$ we let $\LL(v):=v_*\PP$ be its law. For $f,g\in E$ with $f\le g$ we define $[f,g]_{E,\le} := \{x \in E:\ f\le x\le g\}$. If the partial order ``$\le$" or underlying space $E$ are clear from the context, we write $[f,g]_\le,[f,g]_E$ or $[f,g]$ instead. For a sequence of sets $A_n$ we set $\{A_n\ \text{i.o.}\}:=\{x\in\bigcup_{n\in\N} A_n:\ x\in A_n\text{ for infinitely many } n \in \N \}$.

\section{Order preserving random dynamical systems}\label{sec:main_results}

Let $(E,d)$ be a Polish space with partial order ``$\le$" such that 
\begin{equation}\label{eq:M}
  M:=\{(x,y) \in E \times E: x \le y\}
\end{equation}
is closed in $E\times E$ (cf. e.g. \cite[p. 128]{L92}, \cite{KKOB77}). Equivalently, from $x_n,y_n \in E$ with $x_n \le y_n$ and $x_n\to x,y_n\to y$ it follows $x\le y$.

\begin{defn}\label{def:normal}
  We say that the partial order of $E$ is \textit{normal} if there is a function $h:\R_+\to \R_+ \cup \{+\infty\}$ satisfying $\lim_{t\downarrow 0} h(t)=0$ such that
  \begin{equation}\label{eqn:normal}
     \diam([f,g])  \le h(d(f,g)),
  \end{equation}
  for each $f \le g$, where $[f,g]=\{x\in E:f\le x \le g\}$.
\end{defn}

{\blue
The notion of a normal partial order introduced above extends the well-known notion of a normal partial order on a Banach space. Indeed, first recall that a partial order on a Banach space $(E,\|\cdot\|_E)$ is said to be normal if there is a constant $C>0$ such that for all $0\le f\le g$ one has $\|f\|_E\le C\|g\|_E$ (cf.\ e.g.\ \cite{CS04,C02,A76,K64}). This is easily seen to be equivalent to the existence of some constant $\widetilde C>0$ such that 
\begin{equation}\label{normalequiv}
{\mathrm{diam}} ([f,g])\le \widetilde C \|f-g\|_E
\end{equation} 
for all $f \le g$ and thus \eqref{eqn:normal} is satisfied.
%
%
%
% the notion of a normal  introduced in Definition \ref{def:normal} above coincides with the usual definition of 
%
%0 \le x \le y \|x\|  \le diam[0,x] le diam [0,y] \le C \|y\|.
%
%{\blue 
%
Conversely, assume that \eqref{eqn:normal} holds %in the partially ordered Banach space $(E,\|\cdot\|_E)$ 
and choose $\alpha>0$ such that $h(\alpha)<\infty$. Take $f,g \in E$ such that $f \le g$, $f\ne g$. Then
$$
{\mathrm{diam}} ([f,g]) =\frac 1{\alpha}\|f-g\|_E {\mathrm{diam}} \big(\big[\frac{\alpha}{\|f-g\|_E}  f,\frac{\alpha}{\|f-g\|_E} g\big]\big)\le \frac {h(\alpha)}{\alpha}\|f-g\|_E,
$$ 
so we obtain \eqref{normalequiv} with $\widetilde C=  \frac{h(\alpha)}{\alpha}$. Hence, the two concepts of normality coincide on a partially ordered Banach space.}

%	   \item An example of a partial order on a Hilbert space that is not normal is provided in Remark \ref{rmk:not_weakly_normal} below.
{\blue
\begin{rem}\label{rem:normal_2}
  A partial order ``$\le$'' is normal if and only if for each $\d>0$ there is an $\e>0$ such that for all $f \le g$ with $d(f,g)\le \e$ we have $d(a,b)\le \d,$ for all $a,b\in [f,g]$.
\end{rem}
\begin{proof}
  We only have to show that the condition in the statement implies normality. To see this, for $\e>0$ set
    $$h(\e):= \sup\{\diam([f,g]): f,g \in E, f\le g, d(f,g) \le \e\}$$
  and $h(0):=0$. Then $h(\e): \R_+ \to \R_+ \cup \{\infty\}$ is non-decreasing and for all $f,g \in E, f\le g$ we have $\diam([f,g]) \le h(d(f,g))$. Let $\d >0$. By assumption there is an $\e>0$ such that for all $f \le g$ with $d(f,g)\le \e$ we have $d(a,b)\le \d,$ for all $a,b\in [f,g]$. Thus, $\diam([f,g])\le \d$ and, hence, $h(\e) \le \d$, which yields $\lim_{\e \downarrow 0} h(\e) = 0$. 
\end{proof}
}
{\blue The generalization of the concept of normality of a partial order to Polish spaces will turn out to be crucial, the key point being the following proposition.}
\begin{pro}\label{prop:normal}
   Let $K\subseteq E$ be a compact set. Then, $(K,d)$ is a Polish space with normal partial order ``$\le$''.  
\end{pro}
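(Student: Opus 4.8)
The plan is to verify the two assertions of the proposition separately: first that $(K,d)$ is Polish, and then that the partial order induced on $K$ is normal. The first is essentially immediate, and the second is where the actual work lies. For normality I would not try to construct the modulus $h$ directly, but instead verify the equivalent $\e$--$\d$ characterization furnished by Remark \ref{rem:normal_2}, which is far more convenient for a compactness argument.

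For the topological claim I would simply recall that a compact subset of a metric space is closed and complete, and that every compact metric space is separable; hence $(K,d)$, endowed with the restricted metric, is a complete separable metric space, i.e.\ Polish. The order on $K$ is the restriction of ``$\le$'', and intervals satisfy $[f,g]_K = [f,g]_E \cap K$, where I write $[f,g]_K := \{x \in K : f \le x \le g\}$. Since $M$ is closed, each $[f,g]_K$ is closed in $K$ and therefore compact, so its diameter is attained; this is a convenient (though not strictly necessary) observation for what follows.

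For normality I would argue by contradiction via the criterion in Remark \ref{rem:normal_2}. Suppose that criterion fails: there is a $\d > 0$ such that for every $\e > 0$ one can find $f \le g$ in $K$ with $d(f,g) \le \e$ yet $\diam([f,g]_K) > \d$. Taking $\e = 1/n$ produces sequences $f_n \le g_n$ in $K$ with $d(f_n,g_n) \to 0$ together with points $a_n, b_n \in [f_n,g_n]_K$ satisfying $d(a_n,b_n) > \d$. Here I would invoke the compactness of $K$ to pass to a subsequence along which $f_n \to f$, $g_n \to g$, $a_n \to a$ and $b_n \to b$, with all four limits lying in $K$.

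The decisive step, and the main (if mild) obstacle, is to collapse the limiting interval to a single point. Since $d(f_n,g_n)\to 0$ and $d$ is continuous, the limits satisfy $f = g$. From the relations $f_n \le a_n \le g_n$ and $f_n \le b_n \le g_n$, the closedness of $M$ lets me pass to the limit and obtain $f \le a \le g$ and $f \le b \le g$; combining this with $f = g$ and the antisymmetry of the order forces $a = b = f$. On the other hand, continuity of $d$ gives $d(a,b) = \lim_n d(a_n,b_n) \ge \d > 0$, contradicting $a = b$. This contradiction establishes the criterion of Remark \ref{rem:normal_2} and hence the normality of the order on $K$. The only point demanding genuine care is precisely this limit passage: one must use the closedness of $M$ to transport the order relations to the limit and antisymmetry to squeeze the interval to a point, so that the uniform separation $d(a_n,b_n) > \d$ collides with the degeneracy forced by $d(f_n,g_n) \to 0$.
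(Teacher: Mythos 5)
Your proof is correct and follows essentially the same route as the paper's: a contradiction argument using compactness of $K$ to extract convergent subsequences $f_n,g_n,a_n,b_n$, closedness of $M$ to pass the order relations to the limit and force $a=b=f$, and the criterion of Remark \ref{rem:normal_2} to conclude normality. The only additions are the (trivial) verification that $K$ is Polish and the careful remark that intervals in $K$ mean $[f,g]_E\cap K$, both of which the paper leaves implicit.
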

\begin{proof}
Assume that  ``$\le$'' is not normal. Then there is a $\d>0$ such that for all $\e>0$ there are $f^\e \le g^\e$ with $d(f^\e,g^\e)\le \e$, $f^\e,g^\e \in K$ and $a^\e,b^\e \in [f^\e,g^\e]\subseteq K$ such that $d(a^\e,b^\e) \ge \d$. By compactness of $K$ we may choose a sequence $\e_n\to 0$ such that $f^{\e_n},g^{\e_n},a^{\e_n},b^{\e_n}\to f,g,a,b$ respectively. Since $d(f^\e,g^\e)\le \e$ we have $f=g$. Moreover, since $a^\e,b^\e \in [f^\e,g^\e]$ we have $a=b=f$ since $M$ in \eqref{eq:M} is closed. In particular, $a^{\e_n},b^{\e_n}\to a$ in contradiction to $d(a^\e,b^\e) \ge \d$. {\blue By Remark \ref{rem:normal_2} this proves normality of ``$\le$''}.
\end{proof}

The following proposition generalizes \cite[Proposition 1]{CS04}, which required $E$ to be embedded into a partially ordered Banach space $V$, by removing this embedding condition. Note that the proof in \cite{CS04} relies on the linear structure of $E$ and thus the proof given here is significantly different. 
\begin{pro}\label{prop:monotone_convergence}
   Let $X_t,Y_t$ be two stochastic processes taking values in $E$, satisfying $X_t(\omega)\le Y_t(\omega)$ for all $t\in\R_+$, $\omega\in\Omega$. Further, assume that the laws $\LL(X_t),\LL(Y_t)$ converge weakly$^*$ to $\mu$ for $t\to\infty.$ Then,
     $$d(X_t,Y_t)\to 0\quad \text{for }  t\to\infty,$$
   in probability.
\end{pro}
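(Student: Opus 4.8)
The plan is to argue by contradiction and to reduce the statement to a property of a single monotone coupling of $\mu$ with itself. Suppose $d(X_t,Y_t)\not\to 0$ in probability. Then there are $\delta,\alpha>0$ and times $t_n\to\infty$ with $\PP(d(X_{t_n},Y_{t_n})\ge\delta)\ge\alpha$ for all $n$. Since $\LL(X_{t_n})$ and $\LL(Y_{t_n})$ both converge weakly to $\mu$, these families are tight (Prokhorov), hence so are the joint laws $\LL(X_{t_n},Y_{t_n})$ on $E\times E$; passing to a subsequence I may assume $\LL(X_{t_n},Y_{t_n})\to\nu$ weakly for some probability measure $\nu$ on $E\times E$. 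By the portmanteau theorem and closedness of $M$ in \eqref{eq:M} one gets $\nu(M)\ge\limsup_n\PP(X_{t_n}\le Y_{t_n})=1$, so $\nu$ is concentrated on $\{x\le y\}$; similarly $\nu(\{d(x,y)\ge\delta\})\ge\alpha$ since $\{d\ge\delta\}$ is closed. Finally the two marginals of $\nu$ are the weak limits of $\LL(X_{t_n})$ and $\LL(Y_{t_n})$, hence both equal $\mu$. Thus everything reduces to showing that such a $\nu$ cannot exist, i.e.\ that a coupling of $(\mu,\mu)$ supported on $M$ is concentrated on the diagonal.

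For the core I would use increasing functions together with the equality of the two marginals. If $g:E\to\R$ is bounded, Borel and increasing (i.e.\ $x\le y\Rightarrow g(x)\le g(y)$), then, since $\nu$ lives on $M$, the integrand in $\int_{E\times E}(g(y)-g(x))\,d\nu(x,y)$ is nonnegative; on the other hand this integral equals $\int g\,d\mu-\int g\,d\mu=0$ because both marginals are $\mu$. Hence $g(x)=g(y)$ for $\nu$-a.e.\ $(x,y)$. The task is therefore to produce enough increasing functions to separate comparable points where $d\ge\delta$. By tightness of $\mu$ choose a compact $K\subseteq E$ with $\mu(K)\ge 1-\alpha/4$; then $\nu(K\times K)\ge 1-\alpha/2$, so the compact set $C:=\{(x,y)\in K\times K:\ x\le y,\ d(x,y)\ge\delta\}$ has $\nu(C)\ge\alpha/2>0$. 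Suppose I have continuous increasing functions $f_1,\dots,f_N:K\to\R$ and $\gamma>0$ such that for every $(x,y)\in C$ one has $f_i(y)-f_i(x)>\gamma$ for some $i$. Each $f_i$ extends to a bounded, increasing, Borel function on all of $E$ by $\bar f_i(w):=\sup\{f_i(z):\ z\in K,\ z\le w\}$ (with $\sup\emptyset:=0$); one checks that $\bar f_i$ is increasing on $E$, upper semicontinuous (using compactness of $K\cap\{z\le w\}$ and closedness of $M$), hence Borel, and agrees with $f_i$ on $K$. Applying the previous observation to each $\bar f_i$ gives $f_i(x)=f_i(y)$ for $\nu$-a.e.\ $(x,y)\in K\times K$, simultaneously for the finitely many $i$; this contradicts $\nu(C)>0$ and finishes the proof.

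The main obstacle is the separation step, i.e.\ the construction of the functions $f_1,\dots,f_N$. Their existence rests on the fact that comparable distinct points of the compact set $K$ can be separated by continuous increasing functions: given $p\le q$, $p\ne q$ in $K$, the sets $K\cap\{w\le p\}$ and $K\cap\{w\ge q\}$ are disjoint and closed, and are respectively a down-set and an up-set, so an order-Urysohn argument on the compact ordered space $(K,d,\le)$ — which carries a normal partial order by Proposition \ref{prop:normal} — should yield a continuous increasing $f$ with $f(p)<f(q)$. Once pointwise separation is available, compactness of $C$ upgrades it to the finite family with a uniform gap $\gamma$: for each $(x,y)\in C$ pick a separating $f_{(x,y)}$ and, by continuity, a neighbourhood on which the increment stays above half its value, then extract a finite subcover. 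I expect the order-Urysohn lemma on $K$ — turning the normality modulus $h$ of Proposition \ref{prop:normal} into an actual continuous increasing function — to be the only genuinely delicate point; the remaining steps are the soft measure-theoretic manipulations above.
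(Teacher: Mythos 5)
Your proposal is correct in substance and rests on the same germ as the paper's proof: a weak limit $\nu$ (the paper's $\pi$) of the joint laws is a coupling of $\mu$ with itself concentrated on $M$, and since the two marginals coincide, increasing Borel functions cannot distinguish the two coordinates. Where you genuinely diverge is in how this is exploited. The paper finishes in one elementary stroke: it takes a point $(a,b)$, $a<b$, in the support of $\pi$, finds compact sets $K_a\times K_b$ of positive $\pi$-mass inside a product neighbourhood whose ``reversed'' rectangle avoids $M$, and uses the \emph{single} increasing Borel function $f=1_A$ with $A=\{x\in E:\ x\ge v\ \text{for some } v\in K_b\}$ (closed because $M$ is closed and $K_b$ is compact) to obtain the strict inequality $\E f(X)<\E f(Y)$, contradicting equality of marginals. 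You instead prove the integrated identity $g(x)=g(y)$ $\nu$-a.e.\ for \emph{every} bounded increasing Borel $g$, and then build a finite separating family of continuous increasing functions on a compact set $K$ of large mass via an order-Urysohn argument, extended to $E$ by $\bar f_i(w)=\sup\{f_i(z):\ z\in K,\ z\le w\}$. Your reduction, your extension step (monotone, u.s.c.\ by compactness of $K$ and closedness of $M$, equal to $f_i$ on $K$) and the compactness upgrade to a uniform gap $\gamma$ are all sound; note only that with the convention $\sup\emptyset:=0$ you must normalize $f_i\ge 0$ (automatic if the Urysohn functions are $[0,1]$-valued), since otherwise $\bar f_i$ need not be increasing.

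The one point you must repair is the justification of the separation lemma, which you yourself flag as the delicate step. What you need is Nachbin's theorem: a compact Hausdorff space with a closed partial order is \emph{normally ordered}, and consequently disjoint closed down-sets and up-sets (here $K\cap\{w\le p\}$ and $K\cap\{w\ge q\}$, disjoint by antisymmetry) are separated by a continuous increasing $[0,1]$-valued function. This is a true classical result (Nachbin, \emph{Topology and Order}, 1965), and it holds precisely because $K$ is compact and $M$ is closed. It is \emph{not}, however, what Proposition \ref{prop:normal} of the paper provides: the normality there is the metric statement $\diam([f,g]\cap K)\le h_K(d(f,g))$, which is a different notion from order-theoretic (Nachbin) normality, and your parenthetical derivation of the order-Urysohn lemma from Proposition \ref{prop:normal} does not go through as stated. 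So either invoke Nachbin's theorem explicitly, or --- much simpler --- replace the whole Urysohn-plus-extension machinery by the paper's construction: the indicator of the up-set of a compact set is already a bounded increasing Borel function, and feeding it into your own ``core observation'' ends the proof immediately.
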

\begin{proof}
  \textit{Step 1:} Consider the joint distribution
    $$\pi_t = \LL(X_t,Y_t).$$
  Since $\LL(X_t),\LL(Y_t)$ converge weakly$^*$ to $\mu$, $\{\pi_t\}_{t\ge t_0}$ is tight for some $t_0 \ge 0$. Moreover, $\pi_t(M)=1$, where $M$ is given in \eqref{eq:M}. Hence, we may extract a subsequence $(t_n) \to \infty$ such that 
    $$\pi_{t_n} \rightharpoonup \pi\quad\text{weakly}^*$$
  and $ \pi(M)=1$ (since $M$ is closed). Moreover, both marginals of $\pi$ are equal to $\mu$.
    
  \textit{Step 2:} We now prove that $\pi$ is necessarily concentrated on the diagonal.
  
  Assume the contrary. Then there exist $a<b$ such that $(a,b)$ is in the support of $\pi$. Since $M^c$ is open, there exists an open neighborhood $U$ of $(b,a)$ contained in $M^c$ and we may assume that $U$ is a rectangle, i.e. $U=U_b\times U_a$ for $U_a,U_b\subseteq E$ being open sets. Then $U':=U_a\times U_b$ is an open neighborhood of $(a,b)$ and $\pi(U')>0$ by definition of the support of $\pi$.  We can find compact subsets $K_a \subseteq U_a$ and $K_b\subseteq U_b$ such that $\pi(K_a \times K_b)>0$. Now we define
  \begin{align*}
     A:=\{x \in E: x \ge v \text{ for some } v \in K_b\}
  \end{align*}      
  The set $A$ is closed, since $M$ is closed and $K_b$ is compact. Therefore $A$ is Borel. Moreover, by definition, $A$ is an increasing set in the sense that $x \in A$ and $y \ge x$ implies $y \in A$. Furthermore, $A$ and $K_a$ are disjoint. {\blue  Indeed, if $x \in A \cap K_a$, then $x \in A$ implies that there exists some $v \in K_b$ such that $x \ge v$ but $(x,v)\in K_a \times K_b$ 
  implies $x<v$ which is a contradiction. We further note that the indicator function $f$ of $A$ is measurable and non-decreasing, that is  $x \le y$ implies $f(x) \le f(y) $, since $A$ is an increasing set.} We conclude that for $(X,Y)$ being a random variable with law $\pi$ we have
  \begin{align*}
     \E f(X) 
     &= \E f(X)1_{K_a \times K_b}(X,Y)+\E f(X)1_{(K_a \times K_b)^c}(X,Y)\\
     &\le \E f(X)1_{K_a \times K_b}(X,Y)+\E f(Y)1_{(K_a \times K_b)^c}(X,Y),
  \end{align*}    
  {\blue since $\pi$ is concentrated on $M$.}  
  Moreover,
    $$ \E f(Y) 1_{K_a \times K_b}(X,Y)>0=\E f(X)1_{K_a \times K_b}(X,Y)$$
  since $\pi(K_a \times K_b)>0$ and $A \supseteq K_b$. Hence,
    \begin{align*}
       \E f(X) < \E f(Y)
    \end{align*}
  in contradiction to $\LL(X)=\LL(Y)$.
  
  \textit{Step 3:} Since $\pi$ is concentrated on the diagonal and has marginals $\mu$, $\pi$ is the image measure of $\mu$ under the map $x\mapsto (x,x)$. In particular, the whole sequence $\pi_t$ converges to $\pi$. Thus,
    $$\E[d(X_t,Y_t)\wedge 1] \to  0,$$
  for $t\to\infty$ and thus
    $$\PP[d(X_t,Y_t)>\e] 
       \le \frac{1}{\e}\E [d(X_t,Y_t)\wedge 1]
       \to  0,$$
  for $t\to\infty$ and all $\e\in (0,1]$.
\end{proof}

To motivate the following Lemma, we recall that if $\varphi$ is a white noise RDS with associated Markovian semigroup $P_t f(x):=\E f(\varphi_t(\cdot,x))$ having $\mu$ as an invariant probability measure, then there exists a $\varphi$-invariant random probability measure $\pi_\cdot$, the so-called statistical equilibrium, obtained from $\mu$ via{\blue
  $$\pi_\omega = \lim_{k\to\infty} \varphi_{t_k}(\t_{-t_k}\omega)_* \mu\quad \PP-\text{a.s.},$$
where $t_k$ is an arbitrary sequence with $t_k\to\infty$} and one has $\E\pi_\cdot = \mu$. If $\varphi$ is not a white noise RDS then this construction fails and it is an open question in the literature how to define the statistical equilibrium, or to construct any $\varphi$-invariant random probability measure in this case. This is the purpose of the following lemma.

\begin{lemma}\label{lem:ex_stat_eq}
   Let $\varphi$ be a weakly mixing RDS with limit distribution $\mu$, i.e.\ for $\mu$-a.a.\ $x\in E$ we have $\LL(\varphi_t(\cdot,x))\rightharpoonup \mu$ weakly$^*$. Then there exists an $\F_0$-measurable, $\varphi$-invariant random probability measure $\pi_\cdot$ satisfying $\E\pi_\cdot = \mu$.
\end{lemma}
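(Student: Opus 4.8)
The plan is to build $\pi_\cdot$ as a subsequential Ces\`aro limit of the pullback measures and to read off invariance from the cocycle property, thereby circumventing the fact that the pointwise pullback limit recalled above need not exist when $\varphi$ is not a white noise RDS. First I would introduce, for $t\ge 0$, the random probability measures
$$\nu^t_\omega := \varphi_t(\t_{-t}\omega)_*\mu.$$
Since $\varphi_t(\omega,\cdot)$ is generated by the noise on $[0,t]$, the map $\omega\mapsto\varphi_t(\t_{-t}\omega,\cdot)$ depends only on the noise on $[-t,0]$, so each $\nu^t_\cdot$ is $\F_0$-measurable. Using $\t$-invariance of $\PP$ and Fubini,
$$\E\nu^t=\int_\Omega\varphi_t(\t_{-t}\omega)_*\mu\,\PP(\dd\omega)=\int_E\LL(\varphi_t(\cdot,x))\,\mu(\dd x),$$
and weak mixing ($\LL(\varphi_t(\cdot,x))\rightharpoonup\mu$ for $\mu$-a.a.\ $x$) together with dominated convergence gives $\E\nu^t\rightharpoonup\mu$ as $t\to\infty$. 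In the white noise case $\nu^t_\omega$ converges $\PP$-a.s., which is precisely the construction recalled above; in general this pointwise limit fails, which is why I replace it by an averaging scheme.

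Next I would pass to the Ces\`aro averages $\bar\nu^T_\omega:=\tfrac1T\int_0^T\nu^t_\omega\,\dd t$, which are again $\F_0$-measurable and satisfy $\E\bar\nu^T\rightharpoonup\mu$, since Ces\`aro means inherit the limit. In particular $\{\E\bar\nu^T\}_{T\ge 1}$ is tight, so by the compactness theorem for random probability measures on the Polish space $E$ (tightness of barycenters implies relative compactness in the narrow topology) the family $\{\bar\nu^T\}$ is relatively compact. I extract a subsequence $\bar\nu^{T_n}\to\pi$ narrowly. Then $\pi$ is a random probability measure with $\E\pi=\mu$, the barycenter map being continuous along tight narrowly convergent sequences, and $\pi$ admits an $\F_0$-measurable version because $\F_0$-measurability is preserved under narrow limits (equivalently, $\E[\,\cdot\mid\F_0]$ acts continuously), each $\bar\nu^T$ being $\F_0$-measurable.

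It remains to verify $\varphi$-invariance, $\varphi_s(\omega)_*\pi_\omega=\pi_{\t_s\omega}$. The cocycle property gives $\varphi_s(\omega)\circ\varphi_t(\t_{-t}\omega)=\varphi_{s+t}\big(\t_{-(s+t)}(\t_s\omega)\big)$, hence $\varphi_s(\omega)_*\nu^t_\omega=\nu^{s+t}_{\t_s\omega}$, and therefore
$$\varphi_s(\omega)_*\bar\nu^T_\omega-\bar\nu^T_{\t_s\omega}=\frac1T\Big(\int_T^{T+s}\nu^u_{\t_s\omega}\,\dd u-\int_0^s\nu^u_{\t_s\omega}\,\dd u\Big),$$
whose total mass is $O(s/T)\to 0$. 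Both $\omega\mapsto\bar\nu^T_{\t_s\omega}$ and $\omega\mapsto\varphi_s(\omega)_*\bar\nu^T_\omega$ are narrowly continuous operations on random measures: the former because $\t_s$ preserves $\PP$, the latter because $(\omega,x)\mapsto F(\omega,\varphi_s(\omega,x))$ is again a bounded Carath\'eodory function whenever $F$ is and $\varphi_s(\omega,\cdot)$ is continuous. Letting $n\to\infty$ then yields $\varphi_s(\omega)_*\pi_\omega=\pi_{\t_s\omega}$ for each fixed $s$, $\PP$-a.s.; a standard perfection argument exploiting continuity of $s\mapsto\varphi_s(\omega,\cdot)$ produces a single null set valid for all $s$ simultaneously.

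The main obstacle is the passage to the limit for random rather than deterministic measures. One must work in the narrow topology, invoke the correct compactness criterion (tightness of barycenters), and simultaneously check that push-forward by $\varphi_s$ and the shift $\t_s$ act continuously there while $\F_0$-measurability is retained in the limit. These are exactly the points at which the absence of the white-noise (independent increments) structure, hence of a pointwise pullback limit, forces the averaging-plus-compactness construction in place of a direct one.
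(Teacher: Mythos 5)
Your proposal is correct and follows essentially the same route as the paper's proof: Cesàro averages of the pullback measures $\varphi_t(\t_{-t}\omega)_*\mu$, tightness via convergence of the expected measures to $\mu$, Crauel's compactness theorem for random measures to extract a narrow subsequential limit with barycenter $\mu$, and invariance from the cocycle identity with the $O(s/T)$ boundary terms vanishing. The only differences are presentational (you isolate the identity $\varphi_s(\omega)_*\nu^t_\omega=\nu^{s+t}_{\t_s\omega}$ and argue via continuity of the push-forward and shift operations, where the paper writes out the corresponding chain of equalities against random continuous test functions).
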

\begin{proof}
  We consider the random measures
  \[
  \pi_{\omega}^{t}:=\frac{1}{t}\int_{0}^{t}\varphi_{r}(\t_{-r}\omega)_{*}\mu dr
  \]
  and their averages 
  \begin{align*}
    \mu^{t}&:=\frac{1}{t}\E\int_{0}^{t}\varphi_{r}(\t_{-r}\cdot)_{*}\mu dr.
  \end{align*}
  Since $\varphi$ is weakly mixing, we have
  \begin{align*}
     \mu^{t}(f)&=\frac{1}{t}\int_{0}^{t}\int_E \E f(\varphi_{r}(\t_{-r}\cdot,x))d\mu(x) dr\\
     &\to \mu(f),\quad\text{for } t\to\infty,
  \end{align*}
  for each bounded, continuous $f:E\to \R$. Hence, there is a $t_0 \ge 0$ such that for each $\e>0$ there is a compact set $K^{\e}$ such that 
  \[
  \E\pi_{\omega}^{t}(K^{\e})=\mu^{t}(K^{\e})\ge1-\e,
  \]
  for all $t\ge t_0$. Consequently, the random measures $\pi^{t}_\cdot$ are tight (cf.\ \cite[Definition 4.2]{C02-2}) and thus (cf.\ \cite[Theorem 4.4]{C02-2}) there is a sequence $t_{n}\to\infty$ and a random measure $\pi_\cdot$ such that
  \[
  \pi^{t_{n}}_\cdot\rightharpoonup\pi_\cdot \quad\text{weakly}^* \text{ for } n\to \infty,
  \]
  i.e.\ for each random continuous function, that is each $f:\Omega\times E\to \R$ such that $\omega \mapsto f(\omega,x)$ is measurable for each $x\in E$, $x\mapsto f(\omega,x)$ is continuous and bounded for each $\omega\in \Omega$ and $\|f(\cdot,\cdot)\|_{L^1(\Omega;C_b(E))}<\infty$, we have
  \[
    \E\int_E f(\omega,x)d\pi^{t_{n}}_\omega(x)\to \E\int_E f(\omega,x)d\pi_\omega(x) \quad\text{for } n\to \infty.
    \]
  In particular, choosing $f$ independent of $\omega$ yields
  \[
      \mu^{t_n}(f)=E\int_E f(x)d\pi^{t_{n}}_\omega(x)\to \E\int_E f(x)d\pi_\omega(x) \quad\text{for } n\to \infty
      \]
  and thus $\E\pi_\cdot = \mu$.
  
  It remains to prove that $\pi$ is $\varphi$-invariant. We note that for all random continuous functions $f$ and all $t\ge 0$
  \begin{align*}
   & \E\int_E f(\omega,x)d\varphi_{t}(\omega)_{*}\pi_{\omega}(x)\\
   & =\E\int_E f(\omega,\varphi_{t}(\omega,x))d\pi_{\omega}(x)\\
   & =\lim_{n\to\infty}\frac{1}{t_{n}}\int_{0}^{t_{n}}\E\int_E f(\omega,\varphi_{t}(\omega,x))d\varphi_{r}(\t_{-r}\omega)_{*}\mu(x)dr\\
   & =\lim_{n\to\infty}\frac{1}{t_{n}}\int_{0}^{t_{n}}\E\int_E f(\omega,\varphi_{t}(\omega,\varphi_{r}(\t_{-r}\omega,x)))d\mu(x)dr\\
   & =\lim_{n\to\infty}\frac{1}{t_{n}}\int_{0}^{t_{n}}\E\int_E f(\omega,\varphi_{t+r}(\t_{-r}\omega,x))d\mu(x)dr\\
   & =\lim_{n\to\infty}\frac{1}{t_{n}}\int_{0}^{t_{n}+t}\E\int_E f(\omega,\varphi_{r}(\t_{-r+t}\omega,x))d\mu(x)dr\\
   & =\lim_{n\to\infty}\frac{1}{t_{n}}\int_{0}^{t_{n}}\E\int_E f(\t_{-t}\omega,x)d\varphi_{r}(\t_{-r}\omega)_{*}\mu(x)dr\\
   & =\E\int_E f(\t_{-t}\omega,x)d\pi_{\omega}(x)\\
   & =\E\int_E f(\omega,x)d\pi_{\t_{t}\omega}(x)
  \end{align*}
  and thus
  \[
  \varphi_{t}(\omega)_{*}\pi_{\omega}=\pi_{\t_{t}\omega}\quad \PP-\text{a.s..}
  \]
\end{proof}

\begin{thm}\label{thm:monotone}
  Let $\varphi$ be an order-preserving, strongly mixing\footnote{See Appendix \ref{sec:RDS} for the definition.} RDS on $E$ with limit distribution $\mu$. Assume that for all $\e>0$ there exist $f \le g$ in $E$ such that
  \begin{equation}\label{eq:inv_measure_cdt}
    \mu([f,g]) \ge 1-\e. 
  \end{equation}
  Then weak synchronization holds, i.e.\ there is a $\varphi$-invariant random variable $a\in \F_0$ such that
  \begin{equation}\label{eq:convergence}
    d(\varphi_t(\t_{-t}\omega,x),a(\omega))\to 0\quad\text{for }t\to \infty,
  \end{equation}
  in probability, for all $x\in E$.
\end{thm}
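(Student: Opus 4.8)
The plan is to realize the random attractor point $a$ as the atom of the statistical equilibrium $\pi_\cdot$ produced by Lemma \ref{lem:ex_stat_eq}, after proving that $\pi_\omega$ is almost surely a Dirac mass. Since a strongly mixing RDS with limit distribution $\mu$ is in particular weakly mixing, Lemma \ref{lem:ex_stat_eq} supplies an $\F_0$-measurable, $\varphi$-invariant random measure $\pi_\cdot$ with $\E\pi_\cdot=\mu$. If one shows $\pi_\omega=\delta_{a(\omega)}$ for an $\F_0$-measurable $a$, then $\varphi$-invariance $\varphi_t(\omega)_*\pi_\omega=\pi_{\t_t\omega}$ reads $\delta_{\varphi_t(\omega,a(\omega))}=\delta_{a(\t_t\omega)}$, which is exactly the $\varphi$-invariance of the random point $a$; the convergence \eqref{eq:convergence} is then extracted by a sandwiching argument.

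The two analytic ingredients are the following. First, fix $\e>0$ and an interval $[f,g]$ with $\mu([f,g])\ge1-\e$ as in \eqref{eq:inv_measure_cdt}. Applying Proposition \ref{prop:monotone_convergence} to the pullback trajectories $X_t:=\varphi_t(\t_{-t}\cdot,f)$ and $Y_t:=\varphi_t(\t_{-t}\cdot,g)$ — where order-preservation gives $X_t\le Y_t$ for all $t,\omega$, while $\PP$-invariance of $\t_{-t}$ together with strong mixing gives $\LL(X_t),\LL(Y_t)\rightharpoonup\mu$ — yields the endpoint collapse $d(\varphi_t(\t_{-t}\omega,f),\varphi_t(\t_{-t}\omega,g))\to0$ in probability. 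Second, I upgrade this to a collapse of the interval carrying the mass. By order-preservation the pullback $m_t^\omega:=\varphi_t(\t_{-t}\omega)_*\mu$ satisfies $m_t^\omega([\varphi_t(\t_{-t}\omega,f),\varphi_t(\t_{-t}\omega,g)])\ge1-\e$, and from $\E m_t^\cdot=\mu$ (invariance of $\mu$) together with tightness of $\mu$ I fix a compact $K\subseteq E$ carrying all but $\e$ of the mass of each $m_t^\omega$ and containing the two endpoints, in both cases with high probability. On $K$ the order is normal by Proposition \ref{prop:normal}, so with its modulus $h_K$ one gets
$$
  \diam\big([\varphi_t(\t_{-t}\omega,f),\varphi_t(\t_{-t}\omega,g)]\cap K\big)\le h_K\big(d(\varphi_t(\t_{-t}\omega,f),\varphi_t(\t_{-t}\omega,g))\big)\to0 .
$$
Hence, for large $t$ and with high probability, $m_t^\omega$ is concentrated on a set of arbitrarily small diameter.

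Next I show the pullback of the base point converges and identifies $a$. For $s<t$ the cocycle property gives $\varphi_t(\t_{-t}\omega,f)=\varphi_s(\t_{-s}\omega,w)$ with $w:=\varphi_{t-s}(\t_{-t}\omega,f)$, and $w\in[f,g]$ with probability at least $1-\e$ for $t-s$ large since $\LL(w)\rightharpoonup\mu$; on $\{f\le w\le g\}$ both $\varphi_s(\t_{-s}\omega,f)$ and $\varphi_t(\t_{-t}\omega,f)=\varphi_s(\t_{-s}\omega,w)$ lie in $[\varphi_s(\t_{-s}\omega,f),\varphi_s(\t_{-s}\omega,g)]\cap K$, whose diameter tends to $0$. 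Thus $\varphi_t(\t_{-t}\omega,f)$ is Cauchy in probability, defining $a(\omega):=\lim_{t\to\infty}\varphi_t(\t_{-t}\omega,f)$, which is $\F_0$-measurable (each $\varphi_t(\t_{-t}\cdot,f)$ depends only on the noise over $[-t,0]$) and, by the concentration of $m_t^\omega$ near $\varphi_t(\t_{-t}\omega,f)$, equals the atom of $\pi_\omega$; so $\pi_\omega=\delta_{a(\omega)}$. For an arbitrary $x\in E$, I first let strong mixing push $x$ into $[f,g]$: for $T<t$ write $\varphi_t(\t_{-t}\omega,x)=\varphi_{t-T}(\t_{-(t-T)}\omega,u)$ with $u:=\varphi_T(\t_{-t}\omega,x)$, so $u\in[f,g]$ with probability at least $1-\e$ for $T$ large; on this event $\varphi_t(\t_{-t}\omega,x)$ and $\varphi_{t-T}(\t_{-(t-T)}\omega,f)$ both lie in $[\varphi_{t-T}(\t_{-(t-T)}\omega,f),\varphi_{t-T}(\t_{-(t-T)}\omega,g)]\cap K$, of diameter tending to $0$, and since $\varphi_{t-T}(\t_{-(t-T)}\omega,f)\to a(\omega)$, letting $t\to\infty$ and then $\e\to0$ gives \eqref{eq:convergence}.

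I expect the main obstacle to be the second ingredient: normality of the order is only available on compact sets (Proposition \ref{prop:normal}), whereas order intervals in an infinite-dimensional Polish space are typically non-compact, so endpoint collapse does not by itself force interval collapse. The resolution — confining the mass to a single fixed compact $K$ via $\E m_t^\cdot=\mu$ and tightness of $\mu$, and working with $[\,\cdot\,,\,\cdot\,]\cap K$ rather than the full interval — together with the careful bookkeeping of the independent noise increments entering the two nested pullbacks, is the technical heart of the argument.
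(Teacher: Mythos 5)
Your skeleton matches the paper's (statistical equilibrium from Lemma \ref{lem:ex_stat_eq}, endpoint collapse via Proposition \ref{prop:monotone_convergence}, normality on compact sets via Proposition \ref{prop:normal}, concentration on small sets, identification of a Dirac mass, extension to all $x\in E$), but two steps do not hold up, and one of them is fatal. The fatal one is the claim, used twice, that a pullback trajectory lands in the closed interval $[f,g]$ with probability at least $1-\e$ because its law converges weakly$^*$ to $\mu$ and $\mu([f,g])\ge 1-\e$: you invoke this for $w:=\varphi_{t-s}(\t_{-t}\omega,f)$ in the Cauchy-in-probability construction of $a$, and again for $u:=\varphi_T(\t_{-t}\omega,x)$ when treating general $x$. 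Weak convergence gives lower bounds only for \emph{open} sets (portmanteau); $[f,g]$ is closed, and in the infinite-dimensional situations this theorem is designed for it typically has empty interior --- the introduction stresses exactly this point. Concretely, if $\mu=\d_0$ and $\LL(w_t)=\d_{1/t}$, then $\LL(w_t)\rightharpoonup\mu$ and $\mu([0,0])=1$, yet $\PP(w_t\in[0,0])=0$ for all $t$. The paper's Step 5 exists precisely to get around this: it shows that the set of initial conditions contracted toward $a$ contains the \emph{open} set $J(\omega)=\varphi_t^{-1}(\t_{-t}\omega)\big(B_\d(a(\omega))\cup K^c\big)$, which contains the compact set $[f,g]\cap K$ and hence, with high probability, a uniform open enlargement $B_\b([f,g]\cap K)$; only to this open set is portmanteau then applied. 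Without this idea (or a substitute for it), both your construction of $a$ and your final step collapse.

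The second problem is your assertion $\E m_t^\cdot=\mu$ ``by invariance of $\mu$'' for $m_t^\omega:=\varphi_t(\t_{-t}\omega)_*\mu$. The theorem's hypothesis only says that the laws $\LL(\varphi_t(\cdot,x))$ converge to $\mu$; no invariance of $\mu$ under the averaged flow is assumed, and none is available without a Markov/white-noise structure, which the theorem deliberately avoids (cf.\ the fractional-Brownian-motion application in Section \ref{sec:examples}; this is the entire reason Lemma \ref{lem:ex_stat_eq} constructs $\pi_\cdot$ by Ces\`aro averaging). This flaw is repairable --- either work with $\pi_\omega$ itself, using $\E\pi_\cdot=\mu$ and the exact $\varphi$-invariance $\pi_\omega([X_t(\omega),Y_t(\omega)])\ge\pi_{\t_{-t}\omega}([f,g])$ as the paper does, or prove the asymptotic statement $\E m_t^\cdot\rightharpoonup\mu$ by dominated convergence, which suffices for tightness. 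But note the structural cost of your substitution: $\pi_\omega$ satisfies an invariance identity that ties the concentration sets at \emph{different} times to one common measure, which is exactly how the paper glues them into a single random point (Step 2), whereas your $m_t^\omega$ satisfies no such identity across times --- which is why you were forced into the broken re-entry argument in the first place.
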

\begin{proof}
The proof proceeds in several steps. In the first two steps we prove very weak synchronization, i.e.\ the existence of a $\varphi$-invariant random variable $a\in \F_0$ such that $\mu(\cdot) = \E \d_a(\cdot)$. In the last three steps we deduce \eqref{eq:convergence}.

In the following let $\pi_\cdot$ be a $\varphi$-invariant random measure associated to $\mu$ by Lemma \ref{lem:ex_stat_eq}.

\textit{Step 1}: In this step we prove that for each $\e>0$, $\d>0$ we can find $\F_0 $-measurable random sets $A(\omega)$ such that $\diam(A(\omega))\le \d$ and 
  $$\PP\big(\pi_\omega(A(\omega))\ge 1-\e\big)\ge 1-\e.$$

Let $\e >0,\d>0$ and $f,g\in E$ such that $\mu([f,g]) \ge 1-\e^2$. By Markov's inequality, 
\begin{align}\label{oben}
\PP(\pi_\omega([f,g])\ge 1-\e)&=1-\PP(\pi_\omega([f,g]^c)> \e)\\
&\ge 1-\e \nonumber.
\end{align}
For simplicity we set
$$
X_t(\omega):=\varphi_{t}(\t_{-t}\omega,f),\  Y_t(\omega):=\varphi_{t}(\t_{-t}\omega,g).
$$
By strong mixing, the laws $\LL(X_{t}),\LL(Y_{t})$ are uniformly tight for $t\ge t_0$. Hence, we may choose a compact set $K\subseteq E$ such that $\mu (K) \ge 1-\varepsilon^2$ and 
\begin{equation}\label{eq:XY-concentration}
  \PP(X_t,Y_t \in K)\ge 1-\varepsilon,\quad\forall t\ge t_0. 
\end{equation}
Again, by Markov's inequality we have that 
\begin{align}
\PP(\pi_\omega(K)\ge 1-\e)\ge 1-\e \nonumber.
\end{align}
\red{ We further observe that
$$
\pi_\omega([X_t(\omega),Y_t(\omega)]) \ge\pi_{\theta_{-t}\omega}([f,g])\quad \PP\text{-a.s.}
$$}
{\blue Since $\varphi$ is order-preserving we have
\begin{align*}
  [X_t(\omega),Y_t(\omega)]
  &= [\varphi_{t}(\t_{-t}\omega,f),\varphi_{t}(\t_{-t}\omega,g)]\\
  &\supseteq  \varphi_{t}(\t_{-t}\omega,\cdot)[f,g].
\end{align*}
Using $\varphi$-invariance of $\pi_\cdot$ we obtain
\begin{align*}
\pi_\omega([X_t(\omega),Y_t(\omega)])
&=\varphi_{t}(\t_{-t}\omega,\cdot)_*\pi_{\t_{-t}\omega}([X_t(\omega),Y_t(\omega)])\\ 
&\ge \varphi_{t}(\t_{-t}\omega,\cdot)_*\pi_{\t_{-t}\omega}(\varphi_{t}(\t_{-t}\omega,\cdot)[f,g]) \\
&\ge \pi_{\t_{-t}\omega}([f,g])\quad \PP\text{-a.s.}
\end{align*}
}
and thus, by \eqref{oben},
\begin{align}\label{oben2}
  \PP(\pi_\omega([X_t(\omega),Y_t(\omega)])\ge 1-\e)
  &\ge \PP(\pi_{\t_{-t} \omega}([f,g])\ge 1-\e)\\
  &\ge 1-\e\nonumber.
\end{align}
Hence,
\begin{align}
  \PP(\pi_\omega([X_t(\omega),Y_t(\omega)]\cap K)\ge 1-2\e\text{ and } X_t(\omega),Y_t(\omega) \in K)\ge 1-3\e, \nonumber
\end{align}
for all $t\ge t_0$.

By Proposition \ref{prop:normal} there is a function $h_K:\R_+ \to \R_+\cup \{\infty\}$ with $\lim_{t\downarrow 0} h_K(t)=0$ such that
  $$\diam([f,g]\cap K)\le h_K(d(f,g)),$$
for all $f,g\in K$. Hence, for $\omega \in \{X_t,Y_t \in K\}$ we have 
  $$\diam([X_t(\omega),Y_t(\omega)]\cap K)\le h_K(d(X_t(\omega),Y_t(\omega))).$$
By Proposition \ref{prop:monotone_convergence} we have
  $d(X_t,Y_t)\to 0$
for $t\to\infty$ in probability. Hence, with $$A_t(\omega):= [X_t(\omega),Y_t(\omega)]\cap K$$ we have
\begin{equation}\label{eq:conc_on_At}
  \PP\big( \diam(A_t(\omega))\le \d,\ \pi_\omega(A_t(\omega))\ge 1-2\e \big) \ge 1-4\e,
\end{equation}
for all $t\ge t_0=t_0(\e,\d)$. This finishes the proof of step one.

\textit{Step 2}: We show next that $\pi_\omega$ is a random Dirac measure $\PP$-a.s.. 

Let $A^n$ be as in step one with $\e,\d=2^{-n}$ and let
  $$B(\omega):=\bigcup_{n\ge 0} \bigcap_{m\ge n} A^m(\omega).$$
Then $B(\omega)$ is an $\F_0$-measurable random set. For $x,y \in B(\omega)$ we have $x,y\in \bigcap_{m\ge n} A^m(\omega)$ for all $n$ large enough. Since 
 $$\diam\left(\bigcap_{m\ge n} A^m(\omega)\right) =0$$
this implies $x=y$. Hence, $B(\omega)$ consists of at most one (random) point. Moreover, 
\begin{align*}
  \E \pi_\omega (B(\omega) )
  &= \lim_{n\to\infty} \E \pi_\omega \left(\bigcap_{m\ge n} A^m(\omega)\right) \\
  &= 1-\lim_{n\to\infty} \E \pi_\omega \left(\bigcup_{m\ge n} \left(A^m(\omega)\right)^c\right) \\
  &\ge 1- \lim_{n\to\infty} \sum_{m\ge n} 2^{-m+1} \\
  &=1.
\end{align*}
In particular, $B(\omega)=\{a(\omega)\}$ for some $\F_0$-measurable random variable $a:\Omega\to E$. In conclusion, 
\begin{equation}\label{eq:very_weak}
   \pi_\omega=\delta_{a(\omega)}\quad  \PP\text{-a.s..}   
\end{equation}
and $\varphi$-invariance of $a$ follows from  $\varphi$-invariance of $\pi_\omega$.
 
\textit{Step 3}: Let $h \in [x,y]$ for some $x\le y$ such that $\mu([x,y])>0$. We show that then 
  $$  d(a(\omega),\varphi_t(\t_{-t}\omega,h)) \to 0\quad\text{for }t\to\infty $$
in probability.

Let $\d>0$ be arbitrary, fix. Since $\mu([x,y])>0$, for each $\e>0$ small enough and each $f\le g$ with $\mu([f,g])\ge1-\e^2$ we have that $[x,y]\cap [f,g]\ne\emptyset$. Fix such $\e>0$ and $f\le g$. Further let $X_t(\omega), Y_t(\omega), K$ and $A_t(\omega)$ be defined as in step one. 

 Using Proposition \ref{prop:monotone_convergence} this yields
\begin{equation}\label{eq:weak_conv}
  d(\varphi_t(\t_{-t}\omega,h),\varphi_t(\t_{-t}\omega,f))\to 0\quad\text{for }t\to\infty 
\end{equation}
  in probability. From \eqref{eq:XY-concentration}, \eqref{eq:conc_on_At} and \eqref{eq:very_weak} we obtain that
\begin{align*}
   \PP\big( d(a(\omega),\varphi_t(\t_{-t}\omega,f))\le \d \big)
   &\ge \PP\big( \diam(A_t(\omega))\le \d,\ a(\omega)\in A_t(\omega)\big)- \PP\big(X_t \not\in K\big) \\
   &= \PP\big( \diam(A_t(\omega))\le \d,\ \pi_\omega(A_t(\omega))\ge1-2\e\big)- \PP\big(X_t \not\in K\big) \\
   &\ge 1-5\e,
\end{align*}
for all $t\ge t_0=t_0(\e,\d)$. Thus, due to \eqref{eq:weak_conv},
   $$\PP\big( d(a(\omega),\varphi_t(\t_{-t}\omega,h))\le \d \big) \ge 1-6\e,$$
for all $t\ge t_0=t_0(\e,\d)$, which finishes the proof of step 3. 

\textit{Step 4}: We prove that for each $f\le g$ with $\mu([f,g])>0$, $\d>0$ and each compact set $K\subseteq E$ we have that
 $$\lim_{t \to \infty}\PP\big([X_t,Y_t] \cap K \subseteq B_\d(a)\big) =1,$$
where $X_t(\omega)=\varphi_t(\t_{-t}\omega,f),Y_t(\omega)=\varphi_t(\t_{-t}\omega,g)$.

By strong mixing, for each $\e>0$ we may choose a compact set $K_\e$ such that $K\subseteq K_\e$ and 
$$
\PP(X_t,Y_t \in K_\e)\ge 1-\frac{\varepsilon}{4},\quad\forall t\ge t_0. 
$$
By Proposition \ref{prop:monotone_convergence}
$$
  d(X_t,Y_t)\to 0\quad\text{for }t\to\infty 
$$
  in probability. As in step one, we obtain that
$$ \PP\left(\diam([X_t,Y_t]\cap K_\e)\le \frac{\d}{2}\right)\ge 1-\frac{\e}{2},$$
for all $t\ge t_0(\e,\d)$.
Since, by step three we have $d(X_t,a)\to 0$ in probability, this implies that
\begin{align*}
  \PP\big([X_t,Y_t] \cap K \subseteq B_\d(a)\big)
  &\ge \PP\big([X_t,Y_t] \cap K_\e \subseteq B_\d(a)\big)\\
  &\ge 1-\e,
\end{align*}
for all $t\ge t_0(\e,\d)$, which finishes the proof of step four.

\textit{Step 5}: We prove that for each $x\in E$ we have
  $$ d(\varphi_t(\t_{-t}\omega,x),a(\omega))\to 0\quad\text{for }t\to \infty $$
in probability.

Fix $\delta>0$, $x\in E$. By strong mixing we may choose $K\subseteq E$ compact such that    
  $$\PP\big(\varphi_t(\cdot,x)\in K\big)\ge 1-\e, \quad\forall t\ge t_0$$
and $\mu(K)\ge 1-\e$. Furthermore, let $f\le g$ such that $\mu([f,g])\ge 1-\e$. By step four we can choose $t>0$ such that 
 $$\PP([X_t,Y_t] \cap K \subseteq B_\d(a))\ge 1-\e.$$
Hence, 
  $\PP\big([f,g] \cap \varphi_t^{-1}(\t_{-t}\omega) K \subseteq \varphi_t^{-1}(\t_{-t}\omega)B_\d(a(\omega))\big)\ge  1-\e$
and thus
  $$\PP\big([f,g]\subseteq J(\omega):=\varphi_t^{-1}(\t_{-t}\omega)(B_\d(a(\omega))\cup K^c)  \big)\ge  1-\e.$$
Since $J(\omega)$ is an open set, there is a positive random variable $b$ such that $J(\omega) \supseteq B_{b(\omega)}([f,g] \cap K)$ with probability at least $1-2\e$. Thus, choosing a constant $\b>0$ small enough we can ensure that $J(\omega)\supseteq B_\b([f,g] \cap K)$ with probability at least $1-3\e$.

By strong mixing, 
  $$\PP\big(\varphi_u(\theta_{-(u+t)}\omega,x) \in B_\b([f,g]\cap K)\big) \ge 1-3\e,$$ 
for $u$ sufficiently large and thus
 $$\PP\big(\varphi_{u+t}(\theta_{-(u+t)}\omega,x) \in K^c \cup B_\d(a(\omega))\big)\ge 1-6\e.$$ 
Due to the choice of $K$ we have 
$$
\PP(\varphi_{u+t}(\theta_{-(u+t)}\omega,x) \in K^c) \le \e,    
$$
for all $u\ge u_0$. Therefore,
$$
\liminf_{u \to \infty}\PP\big(\varphi_{u+t}(\theta_{-(u+t)}\omega,x) \in B_\d(a(\omega))\big)\ge 1-7\varepsilon.
$$
Since $\delta>0$ and $\varepsilon>0$ are arbitrary, the proof is complete.
\end{proof}

\begin{rk}\begin{enumerate}
  \item Following the same arguments as in the proof of Theorem \ref{thm:monotone} one may in fact prove weak synchronization assuming only the following weaker condition than \eqref{eq:inv_measure_cdt}: Assume that there exists a countable index set $I$ and intervals $[f_i,g_i], \,i \in I$ in $E$ with 
    $$\mu\left(\bigcup_{i\in I} [f_i,g_i] \right) = 1$$ 
  and for each pair 
  $i,j \in I$ there exists some $n \in \N$ and indices $i=i_1,i_2,...,i_n=j$ such that $[f_{i_k},g_{i_k}]\cap[f_{i_{k+1}},g_{i_{k+1}}]\neq \emptyset$ for every $k \in \{1,...,n-1\}$.
  \item If $\varphi$ is a white noise RDS then the proof of Theorem \ref{thm:monotone} can be simplified. Namely, once it has been shown that the statistical equilibrium $\pi_\omega$ is a random Dirac measure (step 2 in the proof of Theorem \ref{thm:monotone}) then \cite[Proposition 2.18]{FGS14} can be applied to obtain weak synchronization.
\end{enumerate}
\end{rk}

The following example demonstrates that the assumptions of Theorem \ref{thm:monotone}  indeed only guarantee weak synchronization and not synchronization:
\begin{ex} Consider the SDE
   $$dX_t=X_t(1-X_t)dW_t$$
  on the one-dimensional torus. Then, the associated RDS is strongly mixing with invariant measure $\mu=\d_0$ and the trivial partial order ($x \le y$ implies $x=y$) is preserved. By Theorem \ref{thm:monotone}, $\{0\}$ is a weak minimal point attractor and weak synchronization holds. However, the weak attractor (which trivially exists) is the whole torus and thus synchronization does not hold.
\end{ex}

\section{Stochastic porous media equations}\label{sec:SPME}

We consider the stochastic porous medium equation
\begin{equation}\label{eq:SPME}
  dX_t = \left(\Delta X_t^{[m]}+X_t\right)dt+dW_t,
\end{equation}
with zero Dirichlet boundary conditions on a bounded, smooth domain $\OO\subseteq \R^d$, $d\le 4$, $m > 1$ and $W$ being a trace-class Wiener process on $H^{-1} := (H_0^1)^*$ with covariance operator $Q\in L(H^{-1})$. For simplicity we use $u^{[m]}:=|u|^{m-1}u$ and we set $V=L^{m+1}(\OO)$.

We first recall that the attractor for the deterministic porous medium equation
   $$dX_t = \left(\Delta X_t^{[m]}+X_t\right)dt,$$
has infinite fractal dimension (cf.\ \cite{EZ08}). In this section, we will show that weak synchronization by noise occurs if $Q$ is non-degenerate (in a sense to be made precise below). In particular, the infinite dimensional deterministic attractor collapses into a zero dimensional random attractor if enough noise is added.

In \cite{G13,BGLR10} a continuous RDS $\varphi$ corresponding to \eqref{eq:SPME} has been constructed on $H^{-1}$, which is easily seen to be a white-noise RDS. We shall assume that the corresponding Markovian semigroup $P_t f(x):=\E f(\varphi_t(\cdot,x))$ is strongly mixing. 

{\blue
\begin{rem}
Sufficient conditions for $\varphi$ corresponding to \eqref{eq:SPME} to be strongly mixing have been given, for example, in \cite{L09}. More precisely, in \cite{L09} strong mixing was shown under the following non-degeneracy assumption for the noise: $Q^\frac{1}{2}$ is injective and
\begin{equation}\label{eq:non_deg_noise}
   \|u\|_V^{m+1} \ge c \|u\|_{Q^\frac{1}{2}}^\s\|u\|_{H^{-1}}^{m+1-\s}\quad \forall u\in V,
\end{equation}
for some $\s\ge 2$, $\s>m-1$, $c>0$, where
\begin{align*}  \|u\|_{Q^\frac{1}{2}}:= \begin{cases}
  \|y\|_{H^{-1}},\quad &Q^\frac{1}{2}y=u \\
  \infty,\quad &\text{otherwise}.   
\end{cases}\end{align*}
From \cite[Corollary 1.3]{W07} we recall the following example: Let $d=1$, $Qe_i = q_i^2 e_i$ with $\sum_{i=1}^\infty \frac{q_i^2}{\l_i}<\infty$ and $e_i$, $\l_i$ the eigenvectors and eigenvalues of $-\Delta$ with domain $(H_0^1\cap H^2)(\OO)$. If $\inf_i q_i^2 >0$ then \eqref{eq:non_deg_noise} holds for any nonnegative $\s \in (m-1,m+1]$. \end{rem}}

If the semigroup $P_t$ is strongly mixing with invariant measure $\mu$ then $\mu$ is concentrated on $V$. Indeed: By It\^o's formula we have that
  $$\|X_t\|_{H^{-1}}^2+\int_0^t \|X_r\|_V^{m+1}dr = \|x_0\|_{H^{-1}}^2+ t\textrm{tr}(Q) $$
and hence
  $$\frac{1}{t}\int_0^t P_r(\|\cdot\|_V^{m+1})(x_0)dr \le \frac{1}{t} \|x_0\|_{H^{-1}}^2+ \textrm{tr}(Q).$$
Strong mixing thus implies that $\mu$ is supported on $V$, i.e.\ $\mu(V)=1$. 

The usual partial order on $H^{-1}$ is defined by: For $x,y\in H^{-1}$ set
  $$x \le y \text{ iff } (y-x)(h)\ge 0,\quad \forall \text{nonnegative } h\in H_0^1.$$
It is not difficult to see that $\varphi$ is ``$\le$"-order-preserving {\blue (cf.\ Lemma \ref{lem:spme_order} below)}. However, it is unclear how to check \eqref{eq:inv_measure_cdt}, since bounded sets in $V$ are not necessarily contained in intervals $[f,g]_\le$. Because of this, synchronization by noise for \eqref{eq:SPME} was left as an open problem in \cite{G13}. 

The key idea here is to introduce an alternative partial order ``$\preceq$" on $H^{-1}$, that is also preserved by $\varphi$ and that is better adapted to the topology of $V$, in the sense that bounded sets in $V$ are contained in intervals $[f,g]_\preceq$: For $x,y\in H^{-1}$ we define
  $$x\preceq y \text{ iff } (-\Delta)^{-1} x \le_{H_0^1} (-\Delta)^{-1}y,$$
where the partial order ``$\le_{H_0^1}$" on $H_0^1$ is defined by: $x\le_{H_0^1} y$ iff $x(\xi) \le y(\xi)$ for a.a.\ $\xi\in \OO$.

{\blue To the best of the authors' knowledge, this partial order on $H^{-1}$ has not been previously introduced in the study of the porous medium equation. However, in \cite{DD79} porous media equations of the type
  $$\partial_t u= \Delta \beta(u)+f$$
with zero Dirichlet boundary conditions and $\beta$ a continuous, non-decreasing real function satisfying $\beta(0)=0$ have been studied by means of the ``dual" problem
\begin{equation}\label{eq:DD_v}
  \partial_t v=  -\beta(-\Delta v)+(-\Delta )^{-1}f,
\end{equation}
obtained by setting $v=(-\Delta)^{-1}u$. In \cite{DD79} a comparison principle for solutions to \eqref{eq:DD_v} was shown, which corresponds, roughly speaking, to $u$ being  ``$\preceq$" order-preserving on  $H^{-1}$.}

\begin{rem}\label{rmk:not_normal}
  The partial order ``$\preceq$" is \emph{not} normal on $H^{-1}$.
\end{rem}
\begin{proof}
  We restrict to the case $\OO=(0,2\pi+2)$. Arbitrary open, smooth domains $\OO\subseteq \R^d$ can be treated similarly and by scaling.
  
  We define
  $$\tilde f^n(x) := \begin{cases} 
              x,&\quad x\in [0,1]\\ 
              1+\sin(n(x-1)),&\quad x\in [1,2\pi+1]\\ 
       2\pi+2-x,&\quad x\in [2\pi+1,2\pi+2]
       \end{cases}$$
  and
  $$\tilde g(x) := \begin{cases} 
                  2x,&\quad x\in [0,\pi+1]\\ 
                  4\pi+4-2x,&\quad x\in [\pi+1,2\pi+2].
           \end{cases}$$
  Then $\tilde f^n, \tilde g \in H_0^1(0,2\pi+2)$, $0\le \tilde f^n \le \tilde g$ and $\|\tilde f^n\|_{H_0^1(0,2\pi+2)}^2 \sim n^2$. Since $\tilde f^n \in [0,\tilde g]_{H_0^1}$ we observe $\diam_{H_0^1} ([0,\tilde g]_{H_0^1}) \ge \|\tilde f^n\|_{H_0^1} \sim n$. Thus,
    $$\diam_{H_0^1} ([0,\tilde g]_{H_0^1}) = \infty.$$
  We note that
  {\blue
  \begin{align*}
    [0,g]_{\preceq} 
    &= \{h\in H^{-1}: \ 0 \preceq h \preceq g\} \\
    &= \{h\in H^{-1}: \ 0 \le (-\Delta)^{-1} h \le (-\Delta)^{-1} g\} \\
    &= (-\Delta)\{\td h\in H_0^1: \ 0 \le \td h \le (-\Delta)^{-1} g\} \\
    &= (-\Delta)[0,(-\Delta)^{-1} g]_{H_0^1}.
  \end{align*}
  Consequently,
  \begin{align*}  
    \diam_{H^{-1}}([0,g]_{\preceq})
    &= \sup_{x,y\in [0,g]_{\preceq}} \|x-y\|_{H^{-1}}\\
    &= \sup_{x,y\in [0,(-\Delta)^{-1} g]_{H_0^1}} \|(-\Delta)x-(-\Delta)y\|_{H^{-1}}\\
    &= \sup_{x,y\in [0,(-\Delta)^{-1} g]_{H_0^1}} \|x-y\|_{H_0^{1}}\\
    &= \diam_{H_0^1}([0,(-\Delta)^{-1}g]_{H_0^1}).
  \end{align*}}
  Hence, for $g=-\Delta \tilde g$ we obtain $\diam_{H^{-1}}([0,g]_{\preceq})=\infty$. In particular, $\preceq$ is not normal on $H^{-1}$.
\end{proof}
We next prove that ``$\preceq$" is preserved by $\varphi$:
\begin{lemma}\label{lem:spme_order}
  Let $x\preceq y$, $x,y\in H^{-1}$, then
  \begin{equation}\label{eq:SPME_comp}
     \varphi_t(\omega,x)\preceq \varphi_t(\omega,y),
  \end{equation}
  for all $t\in \R_+,\ \omega\in \Omega$. {\blue Moreover, if $x\le y$, $x,y\in H^{-1}$, then
    \begin{equation}\label{eq:SPME_comp_2}
       \varphi_t(\omega,x)\le \varphi_t(\omega,y),
    \end{equation}
    for all $t\in \R_+,\ \omega\in \Omega$.}
\end{lemma}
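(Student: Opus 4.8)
The plan is to establish both comparison assertions pathwise, after two reductions. First, since the noise is additive and the \emph{same} $W$ drives both trajectories, it enters all difference quantities identically and therefore cancels; this will let me treat the problem as a (random but initial-condition-independent) deterministic comparison. Second, I observe that the non-standard order $\preceq$ is nothing but the pointwise order $\le_{H_0^1}$ read in the dual variable $v:=(-\Delta)^{-1}X$. Indeed, applying $(-\Delta)^{-1}$ to \eqref{eq:SPME} and using $(-\Delta)^{-1}\Delta=-\Id$, the dual variable formally solves the \emph{dual equation}
$$\partial_t v = -(-\Delta v)^{[m]} + v + (-\Delta)^{-1}\dot W,$$
in the spirit of \eqref{eq:DD_v}, and $x\preceq y$ means exactly $(-\Delta)^{-1}x\le_{H_0^1}(-\Delta)^{-1}y$. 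Thus \eqref{eq:SPME_comp} is the pointwise comparison principle for the dual equation (the principle established in \cite{DD79}), while \eqref{eq:SPME_comp_2} is the classical pointwise comparison principle for \eqref{eq:SPME} itself. Since $\varphi$ is continuous on $H^{-1}$ (cf.\ \cite{G13,BGLR10}) and both orders are closed (the graph of $\le$ by \eqref{eq:M}, and $\preceq$ by continuity of $(-\Delta)^{-1}$), it suffices to prove the comparison for a suitable approximation and pass to the limit.

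I would obtain the comparison through the resolvent of the pathwise equation, i.e.\ the implicit Euler scheme, so that both statements reduce to a single \emph{elliptic} comparison. One Euler step for the dual equation is the elliptic problem $(1-h)v+h(-\Delta v)^{[m]}=G$, and for \eqref{eq:SPME} it is $(1-h)u-h\Delta u^{[m]}=F$, where the right-hand side collects the previous iterate plus the noise increment, which is identical for the two initial conditions. I would prove the elliptic comparison by a maximum principle on the positivity set. For two dual solutions with $G_1\le G_2$, set $w:=v_1-v_2$ and $P:=\{w>0\}$; subtracting gives $(1-h)w=(G_1-G_2)-h\big((-\Delta v_1)^{[m]}-(-\Delta v_2)^{[m]}\big)$ on $P$, and since $1-h>0$ and $G_1-G_2\le0$ this forces $(-\Delta v_1)^{[m]}<(-\Delta v_2)^{[m]}$, hence $-\Delta v_1<-\Delta v_2$, so that $\Delta w>0$ and $w$ is subharmonic on $P$. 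As $w=0$ on $\partial P$ (including on $\partial\OO$ by the Dirichlet condition), the maximum principle yields $w\le0$ on $P$, contradicting $w>0$; hence $P=\emptyset$. The identical argument applied to $\psi:=u_1^{[m]}-u_2^{[m]}$ on $\{\psi>0\}=\{u_1>u_2\}$ gives the comparison for \eqref{eq:SPME}. By induction every Euler step preserves the respective order, and letting $h\downarrow0$ along the resolvent approximation the order passes to the limit because it is closed, yielding \eqref{eq:SPME_comp} and \eqref{eq:SPME_comp_2}.

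I expect the main obstacle to be the degeneracy of the nonlinearity $s\mapsto s^{[m]}$ at $s=0$ (its derivative vanishes there), which obstructs a direct application of the classical maximum principle and of standard elliptic regularity to the resolvent problems. To handle this I would first regularize, replacing $s^{[m]}$ by $s^{[m]}+\e s$ with $\e>0$, for which the elliptic problems are non-degenerate and the subharmonicity argument applies cleanly; its rigorous, regularity-free substitute is to test the elliptic difference with $\mathrm{sign}^+(w)$ and invoke Kato's inequality, thereby avoiding pointwise second derivatives. One then removes $\e$ using continuous dependence of solutions on the nonlinearity in $H^{-1}$. A secondary point is to make the pathwise reduction rigorous at the level of the variational solutions underlying $\varphi$ (cf.\ \cite{G13}): because the noise enters only additively and identically in both trajectories, the comparison obtained for the regularized, time-discretized equations transfers to $\varphi$ by stability and uniqueness of these solutions together with closedness of the orders. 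The $+X_t$ term is harmless throughout, as it merely shifts the coefficient $1-h$ in the resolvent and contributes a monotone lower-order term.
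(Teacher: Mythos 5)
Your skeleton matches the paper's: fix $\omega$ and reduce to a deterministic comparison (the additive noise cancels in differences), read ``$\preceq$'' through the dual variable $v=(-\Delta)^{-1}X$ as in \eqref{eq:DD_v}, prove the comparison for an approximation, and pass to the limit using closedness of the two orders. The genuinely different ingredient is that you prove the comparison yourself, at the level of a single implicit Euler step, via the positivity-set/Kato argument for the two elliptic resolvent problems; the paper instead regularizes the \emph{data} --- a smooth approximation $Z^n$ of the stationary solution $Z$ from \cite{G13}, and initial conditions mollified by iterated resolvents $G^{\varepsilon,l}$, chosen precisely because $(1-\varepsilon\Delta)^{-1}$ commutes with $(-\Delta)^{-1}$ and hence preserves ``$\preceq$'' --- and then cites the known parabolic comparison principles of \cite{DD79} (and \cite{B72} for ``$\le$'') for the smoothed problem. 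Your elliptic step is sound as far as it goes: when the right-hand side lies in $H^{-1}$, the resolvent solutions satisfy $u^{[m]}\in H_0^1$, and testing the difference of the equations with a smooth approximation of $\mathrm{sign}^+$ (equivalently, Kato's inequality) yields the step comparison for both orders, giving a self-contained substitute for the citation of \cite{DD79}.

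The genuine gap is the limit passage, which you dismiss as ``stability and uniqueness of these solutions together with closedness of the orders''; this is exactly where the technical weight lies, and it is not available off the shelf for your scheme. Concretely: the lemma asserts the comparison for \emph{every} $\omega\in\Omega$, and this strength is actually used --- in Step 1 of the proof of Theorem \ref{thm:monotone} order-preservation is applied simultaneously to all (uncountably many) points of an interval $[f,g]$, so a statement holding a.s.\ with a null set depending on the pair of initial conditions would not suffice. If your Euler steps are driven by the raw increments $W_{t_{k+1}}-W_{t_k}$, any convergence proof must control discrete stochastic-integral/quadratic-variation sums, which can be handled almost surely or in probability but not pathwise for every $\omega$; so this route cannot deliver the statement as claimed. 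The alternative is to discretize the rigorous pathwise formulation, but this runs into the very reason \cite{G13} transforms with the stationary solution $Z$ rather than with $W$ itself: the paths of $W$ take values only in $H^{-1}$, not in $V=L^{m+1}$, so your dual equation with the forcing $(-\Delta)^{-1}\dot W$ is purely formal, and the honest equation is \eqref{eq:tranf_SPME}, whose forcing $\Delta Z_t^{[m]}$ lies only in $V^*$ and is merely $L^{(m+1)/m}$-integrable in time. Then your elliptic steps must be posed over the Gelfand triple $V\subseteq H^{-1}\subseteq V^*$, where $u^{[m]}\in H_0^1$ is no longer available and the pointwise positivity-set argument needs justification, and the convergence of implicit Euler for this time-dependent monotone operator requires uniform a priori bounds, Aubin--Lions compactness and identification of the limit by uniqueness --- i.e.\ essentially the approximation argument (including the smoothing $Z^n\to Z$) that occupies the second half of the paper's proof. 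In short, what you call a ``secondary point'' is the main point, and as written your proposal does not close it.
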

\begin{proof} {\blue We first prove \eqref{eq:SPME_comp}: }
  For the proof it is enough to consider a fixed, arbitrary interval $[0,T] \subseteq \R_+$. We first briefly recall the construction of $\varphi$ given in \cite{G13}: In \cite[Theorem 3.2, iii]{G13}, first a strictly stationary solution $Z$ to
    $$dZ = \Delta Z^{[m]}dt+dW_t$$
  is constructed, satisfying {\blue $Z_t(\omega)=Z_0(\t_t \omega)$ for all $\omega \in \Omega$, $t\in \R$ and $Z_\cdot(\omega) \in L^{m+1}_{loc}(\R,V)\cap C(\R;H^{-1})$ for all $\omega \in \Omega$}. Then it is shown that the transformed equation (informally arising by the transformation $Y:=X-Z$)
  \begin{equation}\label{eq:tranf_SPME}\begin{split}
    \frac{d}{dt} Y_t&=\Delta (Y_t+Z_t)^{[m]}+Y_t+Z_t-\Delta Z^{[m]}_t \\
    Y_0 &= x-Z_0
  \end{split}  \end{equation}
  has a unique solution $Y$ for each fixed $\omega \in \Omega$. In the following we let $\omega \in \Omega$ be arbitrary, fixed and suppress the $\omega$-dependency in the notation. The RDS $\varphi$ is then defined by
    $$\varphi_t(\omega,x):=Y_t(\omega) + Z_t(\omega),\quad t\in\R_+,\omega\in\Omega,x\in H^{-1}.$$
  For the proof of \eqref{eq:SPME_comp} it is thus enough to consider $Y$.
  
  Let $J^\e := (1-\e \Delta)^{-1}$ be the resolvent of $-\Delta$ on $H^{-1}$. Since $J^\e$ and $(-\Delta)^{-1}$ commute, $J^\e$ is ``$\preceq$"-order-preserving. Moreover, $J^\e: H^{-1} \to H^{1}_0$ and $J^\e: H_0^1 \cap H^m \to H_0^1 \cap H^{m+2}$ for all $m\in \N$. We further note $\|J^\e x\|_{H^{-1}} \le \|x\|_{H^{-1}}$. By iterating $J^\e$, for each $l\in \N$ we may thus construct linear operators $G^{\e,l}: H^{-1} \to H^{2l-1}\cap H_0^1$. Note $G^{\e,l}x \to x$ in $H^{-1}$ for $\e\to 0$ and each fixed $l\in \N$.
    
  We further consider an approximation $Z^n$ smooth in time and space, such that $Z^n \to Z$ in $L^{m+1}([0,T];V)\cap C([0,T];H^{-1})$ and the corresponding unique solution $Y^n$ to
  \begin{align*}
    \frac{d}{dt} Y^n_t
    &=\Delta (Y^n_t+Z^n_t)^{[m]}+Y^n_t+Z^n_t-\Delta (Z^n_t)^{[m]} \\
    Y^n_0 &= G^{\frac{1}{n},l} Y_0,
  \end{align*}
  where $l$ is chosen large enough to justify the following arguments. We then define the transformation $u^n := Y^n + Z^n$ and observe that $u^n$ is the unique solution to
 {\blue \begin{equation}\label{eq:un}\begin{split}
     \frac{d}{dt} u^n_t&=\Delta (u^n_t)^{[m]}+u^n+ \Delta(Z^n_t)^{[m]} - \frac{d}{dt} Z^n_t \\
     u^n_0 &= Y_0^n + Z_0^n 
     %=  G^{\frac{1}{n},l} Y_0 + Z_0^n 
     = G^{\frac{1}{n},l} x- G^{\frac{1}{n},l}Z_0 + Z_0^n.
  \end{split}\end{equation}}
  Since, $ \Delta(Z^n_t)^{[m]} - \frac{d}{dt} Z^n_t$ is smooth, we may apply \cite[Lemma 1,\ cf. also Corollary 1]{DD79}, to obtain $ u^{1,n}_t \preceq u^{2,n}_t$, {\blue where $u^{1,n}_t$ and $u^{2,n}_t$ correspond to the solutions to \eqref{eq:un} with initial conditions  $ G^{\frac{1}{n},l} x- G^{\frac{1}{n},l}Z_0 + Z_0^n$ and $G^{\frac{1}{n},l} y- G^{\frac{1}{n},l}Z_0 + Z_0^n$ respectively}, and thus also
  \begin{equation}\label{eq:Yn-ineq}
      Y^{1,n}_t \preceq Y^{2,n}_t,\quad\forall t\ge0.
  \end{equation}
  {\blue The idea behind \cite[Lemma 1]{DD79} is to consider the ``dual" problem obtained by setting $v_n:=(-\Delta)^{-1}u_n$ which solves the fully nonlinear PDE
    $$     \frac{d}{dt} v^n_t= (\Delta v^n_t)^{[m]}+v^n -(Z^n_t)^{[m]} - (-\Delta)^{-1}\frac{d}{dt} Z^n_t.$$
  In \cite{DD79} it is then shown that this ``dual" problem satisfies a comparison principle. %Roughly speaking, the idea to prove comparison for the ``dual" problem relies on Beniland and Ha's $L^\infty$ approach \cite[..].
  }
  
  We next need to prove convergence of the chosen approximation. Using standard bounds for the porous medium operator $\Delta u^{[m]}$ on the Gelfand triple $V\subseteq H^{-1} \subseteq V^*$ (cf.\ e.g.\ \cite[Example 4.1.11]{PR07}), we observe
  \begin{align*}
     \frac{d}{dt} \|Y^n_t\|_{H^{-1}}^2
     =&\ _{V^*}\<\Delta (Y^n_t+Z^n_t)^{[m]}+Y^n_t+Z^n_t,Y^n_t\>_V-\ _{V^*}\<\Delta (Z^n_t)^{[m]},Y^n_t\>_V\\
     \le&-\|Y^n_t+Z^n_t\|_{m+1}^{m+1}+\|Y^n_t\|_{H^{-1}}^2+\ _{V^*}\<Z^n_t,Y^n_t\>_V
     +\|\Delta (Y^n_t+Z^n_t)^{[m]}\|_{V^*}\|Z^n_t\|_V\\&-\|\Delta (Z^n_t)^{[m]}\|_{V^*}\|Y^n_t\|_V\\
     \le&-c\|Y^n_t\|_V^{m+1}+C\|Z^n_t\|_{V}^{m+1}+C\|Y^n_t\|_{H^{-1}}^2+C\|Z^n_t\|_{H^{-1}}^2+C\|Z^n_t\|_V^{m+1}\\
     &+C_\e\|Z^n_t\|_{V}^{m+1}+\e \|Y^n_t\|_V^{m+1} \\
     =& -(c-\e)\|Y^n_t\|_V^{m+1}+C\|Y^n_t\|_{H^{-1}}^2+C\|Z^n_t\|_{H^{-1}}^2+C_\e\|Z^n_t\|_V^{m+1},
  \end{align*}
  for some constants $c,C,C_\e>0$ and all $\e>0$. Choosing $\e$ small enough and using Gronwall's Lemma yields
  \begin{align*}
     \sup_{t\in [0,T]} \|Y^n_t\|_{H^{-1}}^2 +c \int_0^T \|Y^n_t\|_V^{m+1}dt \le C,
  \end{align*}
  for some uniform constants $c,C>0$. Hence, also $\frac{d}{dt}Y^n_t \in L^{m+1}([0,T];V^*)$ with uniform bounds and by the Aubin-Lions compactness Lemma we obtain the existence of a subsequence (again denoted by $Y^n$) such that
  \begin{align*}
     Y^n &\to \tilde Y\quad \text{in } C([0,T];H^{-1}) \\
     Y^n &\rightharpoonup \tilde Y\quad \text{in } L^{m+1}([0,T];V).
  \end{align*}
  It is then not difficult to identify $\tilde Y$ as a variational solution to \eqref{eq:tranf_SPME} and uniqueness implies $\tilde{Y} = Y$. Since the partial order ``$\preceq$" is closed on $H^{-1}$, from \eqref{eq:Yn-ineq} we obtain $ Y^{1}_t \preceq Y^{2}_t$ which implies the claim.
  
  {\blue To prove \eqref{eq:SPME_comp_2} we may proceed analogously. Indeed, following \cite{B72} and \cite[equation (4)]{DD79} we obtain $ u^{1,n}_t \le u^{2,n}_t$ and thus also $Y^{1,n}_t \le Y^{2,n}_t$ for all $t\ge0$, replacing \eqref{eq:Yn-ineq} above. This implies \eqref{eq:SPME_comp_2} following the same arguments as for \eqref{eq:SPME_comp}.}
\end{proof}

\begin{thm} Assume that the RDS $\varphi$ associated to \eqref{eq:SPME} is strongly mixing. Then, $\varphi$ has a singleton weak point attractor $A$, i.e.\ weak synchronization holds. Moreover, $A$ attracts all sets $K \subseteq H^{-1}$ contained in ``$\le$"-intervals, i.e.\ all $K\subseteq [f,g]_\le$ for some $f,g\in H^{-1}$. 
\end{thm}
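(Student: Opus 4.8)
The plan is to derive the result from the abstract Theorem~\ref{thm:monotone}, applied to the nonstandard order ``$\preceq$'', and then to upgrade the convergence of points to the attraction of ``$\le$''-intervals by means of Lemma~\ref{lem:spme_order}. All hypotheses of Theorem~\ref{thm:monotone} for ``$\preceq$'' are already in hand except one: $\varphi$ is ``$\preceq$''-order-preserving by \eqref{eq:SPME_comp}; it is strongly mixing with limit distribution $\mu$ by assumption; and $M_\preceq:=\{(x,y):x\preceq y\}$ is closed on $H^{-1}\times H^{-1}$, since $(-\Delta)^{-1}:H^{-1}\to H_0^1$ is continuous and the a.e.\ order is preserved under $H_0^1$-convergent sequences (as already used in the proof of Lemma~\ref{lem:spme_order}). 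Hence the whole task reduces to verifying the concentration condition \eqref{eq:inv_measure_cdt} for ``$\preceq$'': for each $\e>0$ there exist $f\preceq g$ with $\mu([f,g]_\preceq)\ge 1-\e$.

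To this end I would combine the energy identity already derived above, namely $\frac1t\int_0^t P_r(\|\cdot\|_V^{m+1})(x_0)\,dr\le\frac1t\|x_0\|_{H^{-1}}^2+\mathrm{tr}(Q)$, with strong mixing to obtain $\int_V\|u\|_V^{m+1}\,d\mu\le\mathrm{tr}(Q)<\infty$; Chebyshev's inequality then yields a radius $R$ with $\mu(B_R)\ge 1-\e$, where $B_R:=\{u\in V:\|u\|_V\le R\}$. It therefore suffices to show that $B_R$ is ``$\preceq$''-order-bounded, i.e.\ $B_R\subseteq[f,g]_\preceq$ for suitable $f,g$. By elliptic regularity $(-\Delta)^{-1}$ maps $V=L^{m+1}(\OO)$ boundedly into $W^{2,m+1}\cap H_0^1$, and since $m+1>2\ge d/2$ for $d\le 4$, Morrey's embedding gives $W^{2,m+1}\hookrightarrow C^{0,\beta}(\bar\OO)$ with $\beta:=2-\frac{d}{m+1}$ (and even $C^1$ when $m+1>d$). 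As each $(-\Delta)^{-1}u$ vanishes on $\partial\OO$, this produces the uniform estimate $|(-\Delta)^{-1}u(x)|\le CR\,\mathrm{dist}(x,\partial\OO)^{\beta}$ on $B_R$. Taking the torsion function $\phi$ (so $-\Delta\phi=1$, $\phi\in H_0^1$, $\phi>0$ in $\OO$ and $\phi\gtrsim\mathrm{dist}(\cdot,\partial\OO)$ near $\partial\OO$) together with an exponent $\gamma\in(\frac12,\min(1,\beta)]$, the function $\psi:=MR\phi^{\gamma}$ lies in $H_0^1$ (because $\gamma>\frac12$) and dominates $CR\,\mathrm{dist}^{\beta}$ pointwise for $M$ large; setting $g:=-\Delta\psi$ and $f:=\Delta\psi$ gives $-\psi\le(-\Delta)^{-1}u\le\psi$ a.e.\ on $B_R$, i.e.\ $B_R\subseteq[f,g]_\preceq$, whence $\mu([f,g]_\preceq)\ge 1-\e$.

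The delicate point of this step, and the place where the non-normality of ``$\preceq$'' (Remark~\ref{rmk:not_normal}) forces genuine care, is precisely the domination: it needs $\gamma>\frac12$ and $\gamma\le\beta$, i.e.\ $\beta>\frac12$, equivalently $m>\frac{2d}{3}-1$. This holds automatically for $d\le 3$ (since $m>1$) and for $d=4,\ m>\frac53$. In the remaining regime $d=4,\ 1<m\le\frac53$ (so $\beta\le\frac12$) the plain $L^{m+1}$-ball is in fact \emph{not} ``$\preceq$''-order-bounded: a boundary-concentrated scaling $u_\e=-\Delta\big(\e^{\beta}V(\cdot/\e)\big)$ stays bounded in $V$ while the envelope of its $(-\Delta)^{-1}$-images is of order $\mathrm{dist}^{\beta}$, which for $\beta\le\frac12$ admits no $H_0^1$ majorant. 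Here one must replace $B_R$ by a $\mu$-large set adapted to finer information — a boundary-weighted a priori estimate ensuring near-linear boundary decay of $(-\Delta)^{-1}u$ for $\mu$-typical $u$, or the countable family of overlapping intervals permitted by the first remark following Theorem~\ref{thm:monotone} — and I expect this boundary analysis to be the main obstacle of the whole proof. Once \eqref{eq:inv_measure_cdt} is secured, Theorem~\ref{thm:monotone} supplies an $\F_0$-measurable, $\varphi$-invariant $a:\Omega\to H^{-1}$ with $d(\varphi_t(\t_{-t}\omega,x),a(\omega))\to 0$ in probability for every $x\in H^{-1}$, so $A(\omega):=\{a(\omega)\}$ is a singleton weak point attractor and weak synchronization holds.

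For the ``moreover'' part I would first observe that the \emph{standard} order ``$\le$'' on $H^{-1}$ \emph{is} normal: if $0\le z\le w$, then writing any $h\in H_0^1$ as $h=h^+-h^-$ with $\|h^{\pm}\|_{H_0^1}\le\|h\|_{H_0^1}$ one gets $0\le z(h^{\pm})\le w(h^{\pm})\le\|w\|_{H^{-1}}\|h^{\pm}\|_{H_0^1}$, hence $\|z\|_{H^{-1}}\le\|w\|_{H^{-1}}$ and $\diam([x,y]_\le)\le 2\|x-y\|_{H^{-1}}$. Now fix $f\le g$, a set $K\subseteq[f,g]_\le$, and put $X_t(\omega):=\varphi_t(\t_{-t}\omega,f)$, $Y_t(\omega):=\varphi_t(\t_{-t}\omega,g)$. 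By \eqref{eq:SPME_comp_2} we have $X_t\le\varphi_t(\t_{-t}\omega,x)\le Y_t$ for all $x\in K$, so $\varphi_t(\t_{-t}\omega,K)\subseteq[X_t,Y_t]_\le$. Since $X_t\le Y_t$ for all $t,\omega$ and $\LL(X_t),\LL(Y_t)\rightharpoonup\mu$ by strong mixing, Proposition~\ref{prop:monotone_convergence} yields $\|X_t-Y_t\|_{H^{-1}}\to 0$ in probability, whence $\diam([X_t,Y_t]_\le)\to 0$ by normality. Combining this with $X_t\to a$ in probability (the point case of \eqref{eq:convergence}) and the uniform bound $\sup_{x\in K}d(\varphi_t(\t_{-t}\omega,x),a(\omega))\le\diam([X_t,Y_t]_\le)+d(X_t,a)$ gives $\sup_{x\in K}d(\varphi_t(\t_{-t}\omega,x),a(\omega))\to 0$ in probability, i.e.\ $A$ attracts every $K\subseteq[f,g]_\le$, completing the plan.
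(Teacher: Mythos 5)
Your overall architecture coincides with the paper's: apply Theorem \ref{thm:monotone} to $\varphi$ with the non-standard order ``$\preceq$'' (order-preservation from \eqref{eq:SPME_comp}, closedness of $M_{\preceq}$, strong mixing by hypothesis), and derive the ``moreover'' part from \eqref{eq:SPME_comp_2} together with normality of ``$\le$'' on $H^{-1}$ and Proposition \ref{prop:monotone_convergence}. That last part of your argument is essentially the paper's own (the paper invokes normality of ``$\le$'' without proof; your duality argument $0\le z\le w\Rightarrow\|z\|_{H^{-1}}\le\|w\|_{H^{-1}}$ is a correct justification). The divergence is in the verification of the concentration condition \eqref{eq:inv_measure_cdt}, and there your proposal has a genuine gap, which you yourself flag: dominating the $V$-ball $B_R$ by $\pm MR\phi^{\gamma}$ (torsion function $\phi$) requires $\gamma\in(\tfrac12,\min(1,\beta)]$, hence $\beta=2-\tfrac{d}{m+1}>\tfrac12$, i.e.\ $m>\tfrac{2d}{3}-1$. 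This covers $d\le 3$, $m>1$ and $d=4$, $m>\tfrac53$, but not the range $d=4$, $m\in(1,\tfrac53]$, which the theorem asserts as well; for that range you offer only unexecuted alternatives (boundary-weighted a priori estimates, or the countable-intervals remark). As it stands, your proof therefore establishes a strictly weaker statement than the theorem.

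For comparison, the paper verifies \eqref{eq:inv_measure_cdt} by a much terser route: it pushes $\mu$ forward under $(-\Delta)^{-1}$ to $\tilde\mu$ on $W^{2,m+1}$, notes the embedding $W^{2,m+1}\hookrightarrow C^0$ (valid for all $d\le4$, $m>1$), and then asserts that one can find $\tilde f_n=(-\Delta)^{-1}f_n\le(-\Delta)^{-1}g_n=\tilde g_n$ with $f_n,g_n\in V$ and $\tilde\mu([\tilde f_n,\tilde g_n])\ge1-2^{-n}$; no torsion-function construction and no restriction to $\beta>\tfrac12$ appears there. You should be aware, however, that your Hardy-inequality obstruction is correct and cuts both ways: when $\beta\le\tfrac12$, not only $V$-balls but even \emph{compact} subsets of $(-\Delta)^{-1}(V)\subseteq W^{2,m+1}\cap W_0^{1,m+1}$ can fail to admit any $H_0^1$ upper bound. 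Indeed, shrinking your boundary bubbles so that their $V$-norms $s_k$ tend to $0$ slowly (e.g.\ $\sum_k s_k^2\,\e_k^{2\beta-1}=\infty$ for layer scales $\e_k\downarrow0$) yields a set that is compact in $V$, while any pointwise majorant $\psi$ of the corresponding potentials satisfies $\int_\OO \psi^2\,\mathrm{dist}(\cdot,\partial\OO)^{-2}\,dx=\infty$, contradicting Hardy's inequality for $\psi\in H_0^1$. Consequently the paper's assertion cannot be deduced from $\tilde\mu(W^{2,m+1})=1$, tightness, and the $C^0$-embedding alone; finer boundary information on the invariant measure would be needed, and the paper does not supply it. In short, the case $d=4$, $m\in(1,\tfrac53]$ that you leave open is a real difficulty rather than an artifact of your method, and it is one that the paper's own one-line treatment does not visibly resolve either; your proposal is the more careful of the two where it applies, but neither argument closes that regime.
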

\begin{proof}
  As noted above $\varphi$ is a ``$\preceq$"-order-preserving, white noise RDS on $H^{-1}$ and the invariant measure $\mu$ is concentrated on $V$. It remains to check \eqref{eq:inv_measure_cdt} with respect to the partial order ``$\preceq$".

  We first observe that $W^{2,m+1} \hookrightarrow C^0$ if $2-\frac{d}{m+1}>0$, or equivalently $2(m+1)>d$, which is satisfied since $m>1,d\le 4$. Recall $\mu(V)=1$. We now consider $\tilde \mu := (-\Delta)^{-1}_* \mu$ on $W^{2,m+1}$, i.e.\ the push-forward of $\mu$ under $(-\Delta)^{-1}$, and observe
    $$\tilde\mu(W^{2,m+1})=1.$$
  Hence, we can find $\tilde f_n = (-\Delta)^{-1} f_n \le (-\Delta)^{-1} g_n = \tilde g_n $ with $f_n,g_n \in V$ such that $\tilde\mu([\tilde f_n,\tilde g_n]_{W^{2,m+1}})\ge 1-2^{-n}$. Thus, 
  \begin{align*}
    \mu([ f_n, g_n]_{H^{-1};\preceq})
    &\ge \mu([ f_n, g_n]_{L^{m+1};\preceq}) \\
    &= \tilde \mu([(-\Delta)^{-1} f_n, (-\Delta)^{-1} g_n]_{W^{2,m+1}})\\
    &\ge 1-2^{-n}.
  \end{align*}
  Theorem \ref{thm:monotone} concludes the proof of weak synchronization.
  
  Let now $K\subseteq [f,g]_\le$. Since ``$\le$" is a normal partial order on $H^{-1}$ we have $$\diam([f,g]_\le)\le h(\|f-g\|_{H^{-1}}).$$ Since $A$ is a singleton weak point attractor, this implies $$\diam([\varphi_t(\cdot,f) ,\varphi_t(\cdot,g)]_\le)\to 0$$ and thus $\diam(\varphi_t(\cdot,K))\to 0$ in probability, which finishes the proof.
\end{proof}

We note that in general it is not true that $\varphi_t(\omega,x)$ takes values in $V$ if $x\in V$. In order to show such an invariance property additional regularity of $W$ would be required. In contrast, the invariant measure $\mu$ is always supported on $V$ as long as $W$ is a trace-class Wiener process in $H$. At this point the generalization put forward in Theorem \ref{thm:monotone} is crucial, since condition \eqref{eq:inv_measure_cdt} only requires $\mu$ to be ``nicely" supported, rather than the partial order ``$\preceq$" to be admissible, as it had to be assumed in \cite{CS04}. Since $\preceq$ is neither admissible nor normal, the results from \cite{CS04} cannot be used in the case of stochastic porous media equations.

\section{Further Examples}\label{sec:examples} 

\subsection{Stochastic differential equations driven by fractional Brownian motion}
We consider one-dimensional stochastic differential equations of the type
\begin{equation}\begin{split}\label{eq:sde-fbm}
  dX^x_t &= b(X^x_t)dt+dB^H_t\\
  X^x_0 &= x\in \R,
\end{split}\end{equation}
where $B^H$ is a two-sided fractional Brownian motion with Hurst index $H\in (0,1)$. For example, $B^H$ can be constructed by
\begin{equation}\label{eq:FBM}
  B^H_t = \a_H \int_{-\infty}^0(-r)^\frac{H}{2}(dW_{r+t}-dW_r),
\end{equation}
where $W$ is a one-dimensional Brownian motion and $\a_H$ is an appropriately chosen constant (cf.\ e.g.\ \cite{H05}). From \eqref{eq:FBM} we can read-off that fractional Brownian motion has strictly stationary increments, in the sense that
\begin{equation}\label{eq:FBM_shift}
  B^H_{t+s}(\omega)-B^H_{t}(\omega)= B^H_t(\theta_s\omega),
\end{equation}
where $\t$ is the usual Wiener shift.

We further assume that there are constants $c>0,C,N\ge 0$ such that
  $$(b(x)-b(y))(x-y)\le \min(C-c|x-y|^2,C |x-y|^2),$$
for all $x,y\in \R$ and 
  $$|b(x)|+|b'(x)|\le C(1+|x|)^N,$$
for all $x\in \R$. If $H\ge \frac{1}{2}$ we further assume that $b'$ is globally bounded. 

In order to construct the associated RDS one considers the transformation $Y^x_t = X^x_t -B^H_t$ satisfying
\begin{equation*}\begin{split}
  dY^x_t &= b(Y^x_t+B^H_t)dt\\
  Y^x_0 &= x\in \R,
\end{split}\end{equation*}
which is easily seen to have a unique solution. Thus, in view of \eqref{eq:FBM_shift},
   $$\varphi_t(\omega,x):=Y^x_t(\omega)+B^H_t(\omega)$$
defines a continuous RDS. By uniqueness of solutions $\varphi$ is order-preserving on $\R$.

Following the setup put forward in \cite{H05} the stochastic dynamical system\footnote{For the notion of a stochastic dynamical system cf.\ \cite{H05}.} $\td\varphi$ associated to \eqref{eq:sde-fbm} is a weak solution to  \eqref{eq:sde-fbm}. Since also $\varphi$ is a weak solution, by weak uniqueness we have $\LL(\td\varphi_t(\cdot,x))=\LL(\varphi_t(\cdot,x))$. Moreover, by \cite[Theorem 6.1]{H05} there is a probability measure $\mu$ on $\R$ such that
  $$\LL(\td\varphi_t(\cdot,x))\to\mu\quad\text{for }t\to\infty$$
in total variation norm. Hence, $\varphi$ is strongly mixing. We conclude
\begin{ex}
  The RDS $\varphi$ associated to \eqref{eq:sde-fbm} satisfies weak synchronization.
\end{ex}

\subsection{Stochastic differential inclusions and reflected diffusions}
We consider stochastic differential inclusions of the type
\begin{equation}\begin{split}\label{eq:mv-sde}
  dX^x_t + \partial \eta(X^x_t)dt&\ni b(X^x_t)dt+dW_t\\
  X^x_0 &= x\in \R^d,
\end{split}\end{equation}
where $\partial \eta$ is the subdifferential of a convex, lower semicontinuous (lsc), proper function $\eta:\R^d\to \R\cup\{+\infty\}$ with domain $\mathrm{dom}(\eta)$, $W$ is a standard Brownian motion on $\R^d$ and $b$ is globally Lipschitz continuous. We assume that
  $$\mathrm{int}(D(\partial\eta))\ne\emptyset.$$
By \cite{C98}, for each $x \in \overline{D(\partial \eta)}=\overline{\mathrm{dom}(\eta)}$ there exists a unique solution $X^x \in C([0,T];\R^d)$ to \eqref{eq:mv-sde} taking values in $\overline{D(\partial\eta)}$. Since the construction in \cite{C98} is path-wise, i.e.\ existence and uniqueness for \eqref{eq:mv-sde} is proven for $W$ replaced by an arbitrary continuous path starting at $0$, 
  $$\varphi_t(\omega,x):=X_t^x(\omega),\quad t\in\R_+,\omega\in \Omega, x\in \R^d,$$
defines a continuous RDS on $E:=\overline{D(\partial\eta)}$. We note that in general $\overline{D(\partial\eta)}$ is a proper subset of $\R^d$, indeed:
\begin{ex}[Reflected diffusions]
   Let $\overline D\subseteq \R^d$ be a nonempty, closed, convex set in $\R^d$. Let $$\eta(x)=I_{\overline D}(x)=\begin{cases} 0\quad &\text{if } x\in \overline D \\ +\infty\quad &\text{otherwise}. \end{cases}$$
   Then \eqref{eq:mv-sde} corresponds to 
       $$dX_t=b(X_t)dt+dW_t,$$
   with normal reflection on $\partial\overline D$.
\end{ex}

If $\mathrm{dom}(\eta)$ is a bounded set and $b\in C^2(\R^d)$ with bounded derivatives, then $\varphi$ is strongly mixing by \cite{CJ98}.

In order to apply Theorem \ref{thm:monotone} we need $\varphi$ to be order-preserving. Therefore, we shall restrict to $d=1$ henceforth. Uniqueness of solutions to \eqref{eq:mv-sde} implies that for $x,y\in \overline{D(\partial \eta)}$ with $x\le y$ we have
  $$X^x_t(\omega) \le X_t^y(\omega)\quad \forall t\in \R_+,\ \omega\in\Omega.$$
Thus, $\varphi$ is order-preserving. We note that in general $x\mapsto \varphi_t(x,\omega)$ is not one-to-one. In particular, the strong order $x<y$ is not necessarily preserved under $\varphi$.

An application of Theorem \ref{thm:monotone} yields
\begin{ex}
  Assume that $\eta:\R\to\R$ is a convex, lsc, proper function with bounded domain and $b\in C^2(\R)$ with bounded derivatives. Then, the RDS $\varphi$ corresponding to \eqref{eq:mv-sde} satisfies weak synchronization. 
\end{ex}

\subsection{SPDE with two reflecting walls}
We consider the following SPDE with two reflecting walls
\begin{equation}\label{eq:reflected_SPDE}\begin{split}
   dX_t &=\partial_x^2 X_tdt +f(X_t)dt+d W_t\ \text{on }S^1\\
   X_0 &= x \in C(S^1) \\
   h^1 &\le X_t \le h^2\quad\forall t\in[0,T],
\end{split}\end{equation}
where $S^1$ is the one-dimensional sphere, $dW_t$ denotes space-time white noise on $S^1\times[0,T]$ and $h^1,h^2 \in C(S^1)$ satisfy $h^1<h^2$, $h^i \in H^2(S^1)$, $i=1,2$. We further assume that $f$ is Lipschitz continuous and set
  $$E:=\{h \in C(S^1):\ h^1 \le h \le h^2\}.$$

In order to construct an associated RDS we consider the Ornstein-Uhlenbeck process $Z$ corresponding to
  $$dZ_t = \partial_x^2 Z_t dt+dW_t,$$
given by $Z_t = \int_{0}^t e^{\partial_x^2 (t-s)}dW_s$ and the transformed PDE
\begin{equation}\label{eq:reflected_SPDE_2}\begin{split}
   \partial_t Y_t&= \partial_x^2 Y_t +f(Y_t{\blue +}Z_t)\\
   Y_0 &= x \in C(S^1) \\
   h^1-Z_t &\le Y_t \le h^2-Z_t\quad\forall t\in [0,T].
\end{split}\end{equation}
Well-posedness of \eqref{eq:reflected_SPDE_2} can be shown as in \cite{O06} for every $x \in E$. Uniqueness for \eqref{eq:reflected_SPDE_2} then implies that 
  $$\varphi_t(\omega,x):=Y_t(\omega)+Z_t(\omega)$$
defines a continuous RDS on $E$. Moreover, following \cite[Lemma 2.6]{O06} we have comparison, i.e.\ if $x,y \in E$ with $x \le y$ then $\varphi_t(\omega,x) \le \varphi_t(\omega,y)$. Hence, $\varphi$ is order-preserving on $E$. It remains to observe that by the coupling argument used in \cite[Theorem 3.1]{YZ14} $\varphi$ is strongly mixing. An application of Theorem \ref{thm:monotone} yields
\begin{ex}
  The RDS $\varphi$ corresponding to  \eqref{eq:reflected_SPDE} satisfies weak synchronization.
\end{ex}

\appendix

\section{Background on random dynamical systems}\label{sec:RDS}

Let $(E,d)$ be a Polish space, that is, a topological space homeomorphic to a complete, separable metric space, endowed with Borel $\sigma$-algebra $\EE$. Further, let $\left(  \Omega,\F,\PP, \t\right)  $ be a {\em metric dynamical system}, that is, 
$(\Omega,\F,\PP)$ is a probability space (not necessarily complete) and  $\t:=\left(  \theta_{t}\right)  _{t\in\R}$ 
is a group of jointly measurable maps on $\left(  \Omega,\F,\PP\right)  $ that leaves $\PP$
invariant.

We say that a map $\varphi: \R_+ \times \Omega \times E \rightarrow E$ is a {\em perfect cocycle} if $\varphi$ is measurable, $\varphi_{0}(  \omega,x)  =x$ and $\varphi_{t+s}\left(
\omega,x\right)  =\varphi_{t}\left(  \theta_{s}\omega,\varphi_{s}\left(
\omega,x\right)  \right)  $ for all $x\in E$, $t,s\geq0$, $\omega\in\Omega$. 
We will assume that $\varphi_{s}(\w,\cdot)$ is continuous for each $s \ge 0$ and $\omega \in \Omega$. 
The collection $(\Omega, \F,\PP,\t,\varphi)$ is then said to be a 
{\em random dynamical system} (RDS), see \cite{A98} for a comprehensive treatment. Given an RDS $(\Omega, \F,\PP,\t,\varphi)$ we define the {\em skew-product} flow $\Theta$ on $\Omega\times E$ by $\Theta_t(\omega,x)=(\t_t\omega,\varphi_t(\omega,x))$. 

Let $E$ be a Polish space with closed partial order ``$\le$" (cf.\ \eqref{eq:M}) and $(\Omega, \F,\PP,\t,\varphi)$ an RDS on $E$. Then $\varphi$ is said to be ``$\le$"-order-preserving if $\varphi_t(\omega,x)\le \varphi_t(\omega,y)$ for all $x,y\in E$, $x\le y$ and all $t\ge 0$, $\omega\in \Omega$.

Given an RDS  $\varphi$ we define the two-parameter filtration $\FF=(\F_{s,t})_{-\infty < s \le t < \infty}$ of sub$-\sigma$ algebras
of $\F$ given by $\F_{s,t}=\s\{\varphi_h(\theta_s\omega):\ h\in [0,t-s]\}$. It follows that $\theta_r^{-1}(\F_{s,t})=\F_{s+r,t+r}$ for all $r,s,t$ . For each $t \in \R$, let $\F_t$ be the smallest $\sigma$-algebra containing all $\F_{s,t}$, $s \le t$  
and let $\F_{t,\infty}$ be the smallest $\sigma$-algebra containing all $\F_{t,u}$, $t \le u$. If  $\F_{s,t}$ and $\F_{u,v}$ are independent for all $s \le t \le u \le v$, we call  $(\Omega, \F,\FF,\PP,\t,\varphi)$  a 
{\em white noise (filtered) random dynamical system}. 

An invariant measure for an RDS $\varphi$ is a probability measure on $\Omega\times E$ with marginal $\PP$ on $\Omega$ that is invariant under $\Theta_t$ for $t\ge 0$. 
For each probability measure $\pi$ on $\Omega\times E$ with marginal $\PP$ on $\Omega$ there is a unique disintegration $\omega\mapsto \pi_\omega$ and a random probability measure $\pi_\omega$ is an 
invariant measure for $\varphi$ iff $\varphi_t(\omega)_*\pi_\omega = \pi_{\t_t\omega}$ for all $t\ge 0$, almost all $\omega\in \Omega$ (where the $\PP$-zero set may depend on $t$). Here $\varphi_t(\omega)_*\pi_\omega$ denotes the push-forward of $\pi_\omega$ under $\varphi_t(\omega)$. An invariant measure $\pi_\omega$ is said to be a Markov measure, if $\omega\mapsto \pi_\omega$ is measurable with respect to the past $\F_0$.
In case of a white noise RDS $\varphi$ we may define the associated Markovian semigroup by
  $$ P_t f(x):=\E f(\varphi_t(\cdot,x)), $$
for $f$ being measurable, bounded. There is a one-to-one correspondence between invariant measures for $P_t$ and Markov invariant measures for $\varphi$ (cf.\ \cite{C91}): {\blue If $\mu$ is $P_t$-invariant, then for every sequence $t_k \to \infty$ the weak$^*$ limit
\begin{equation}\label{eq:inv_meas}
   \pi_\omega := \lim_{k\to\infty}\varphi_{t_k}(\t_{-{t_k}}\omega)_*\mu 
\end{equation}
exists $\PP$-a.s. and it is a Markov invariant measure for  $\varphi$. In addition, $\pi_\cdot$ does not depend on the chosen sequence $t_k$, $\PP$-a.s. Vice versa, $\mu:=\E\pi_\omega$ defines an invariant measure for $P_t$.}

A Markovian semigroup $P_t$ with ergodic measure $\mu$ is said to be \textit{strongly mixing} if 
 $$P_t f(x) \to \int_E f(y) d\mu(y)\quad \text{for } t\to\infty$$ 
for each continuous, bounded $f$ and all $x\in E$. Similarly, we say that an RDS $\varphi$ (not necessarily a white noise RDS) is strongly mixing if the laws of $\varphi_t(\cdot,x)$ converge weakly$^*$ to a probability measure $\mu$ for $t\to\infty$ for all $x\in E$.

\begin{defn}
  A family $\{D(\w)\}_{\w \in \Omega}$ of non-empty subsets of $E$ is said to be
  \begin{enumerate}
  \item  a random closed (resp. compact) set if it is $\PP$-a.s.\ closed (resp. compact) and $\w \mapsto d(x,D(\w))$ is $\mathcal{F}$-measurable for each $x \in E$. In this case we also call $D$, $\mathcal{F}$-measurable.
  \item $\varphi$-invariant, if for all $t\ge0$
  		$$\varphi_t(\w,D(\w))=D(\t_t\w),$$
  		for almost all $\w\in \Omega$.
  \end{enumerate}
  
\end{defn}

Next, we recall the definition of a pullback attractor and a weak (random) attractor (cf. \cite{CF94,O99}). 
 
\begin{defn}
Let $(\Omega, \F,\PP,\t,\varphi)$ be an RDS. A random, compact set $A$ is called 
a {\em pullback attractor}, if
\begin{enumerate}
\item $A$ is $\varphi$-invariant, and 
\item for every compact set $B$ in $E$, we have
$$
\lim_{t \to \infty}\sup_{x \in B} d(\varphi_t(\theta_{-t}\omega,x),A(\omega))=0, \mbox{ almost surely}.
$$
\end{enumerate}
The map $A$ is called a {\em weak attractor}, if it satisfies the properties above with almost sure convergence replaced by 
convergence in probability in (2). It is called a {\em (weak) point attractor}, if it satisfies the properties above with compact sets $B$ replaced by single points in (2).

A (weak) point attractor is said to be {\em minimal} if it is contained in each (weak) point attractor.
\end{defn}

Clearly, every pullback attractor is a weak attractor but the converse is not true (see e.g.~\cite{S02} for examples). Weak attractors are unique (cf. \cite[Lemma 1.3]{FGS14}).

\begin{defn}
Let $(\Omega, \F,\PP,\t,\varphi)$ be an RDS. We say that (weak) synchronization occurs, if there exists a weak (point) attractor consisting of a single random point $\PP$-a.e.. 
\end{defn}

{\blue
\subsection*{Acknowledgement}
We thank the anonymous referees for several suggestions that helped to improve the presentation of the manuscript.}

\bibliographystyle{plain}
\bibliography{synchronization-refs}

\def\cprime{$'$}
\begin{thebibliography}{10}

\bibitem{A76}
Herbert Amann.
\newblock Fixed point equations and nonlinear eigenvalue problems in ordered
  {B}anach spaces.
\newblock {\em SIAM Rev.}, 18(4):620--709, 1976.

\bibitem{ACH83}
L.~Arnold, H.~Crauel, and V.~Wihstutz.
\newblock Stabilization of linear systems by noise.
\newblock {\em SIAM J. Control Optim.}, 21(3):451--461, 1983.

\bibitem{A98}
Ludwig Arnold.
\newblock {\em Random dynamical systems}.
\newblock Springer Monographs in Mathematics. Springer-Verlag, Berlin, 1998.

\bibitem{AC98}
Ludwig Arnold and Igor Chueshov.
\newblock Order-preserving random dynamical systems: equilibria, attractors,
  applications.
\newblock {\em Dynam. Stability Systems}, 13(3):265--280, 1998.

\bibitem{ACW83}
Ludwig Arnold, Hans Crauel, and Volker Wihstutz.
\newblock Stabilization of linear systems by noise.
\newblock {\em SIAM J. Control Optim.}, 21(3):451--461, 1983.

\bibitem{B91}
Peter~H. Baxendale.
\newblock Statistical equilibrium and two-point motion for a stochastic flow of
  diffeomorphisms.
\newblock In {\em Spatial stochastic processes}, volume~19 of {\em Progr.
  Probab.}, pages 189--218. Birkh\"auser Boston, Boston, MA, 1991.

\bibitem{B72}
Philippe B{\'e}nilan.
\newblock Solutions int\'egrales d'\'equations d'\'evolution dans un espace de
  {B}anach.
\newblock {\em C. R. Acad. Sci. Paris S\'er. A-B}, 274:A47--A50, 1972.

\bibitem{BGLR10}
Wolf-J{\"u}rgen Beyn, Benjamin Gess, Paul Lescot, and Michael R{\"o}ckner.
\newblock The global random attractor for a class of stochastic porous media
  equations.
\newblock {\em Comm. Partial Differential Equations}, 36(3):446--469, 2011.

\bibitem{CKS06}
T.~Caraballo, P.~E. Kloeden, and B.~Schmalfuss.
\newblock Stabilization of stationary solutions of evolution equations by
  noise.
\newblock {\em Discrete Contin. Dyn. Syst. Ser. B}, 6(6):1199--1212, 2006.

\bibitem{CCK07}
Tom{\'a}s Caraballo, Igor~D. Chueshov, and Peter~E. Kloeden.
\newblock Synchronization of a stochastic reaction-diffusion system on a thin
  two-layer domain.
\newblock {\em SIAM J. Math. Anal.}, 38(5):1489--1507, 2006/07.

\bibitem{CCLR07}
Tom{\'a}s Caraballo, Hans Crauel, Jos{\'e}~A. Langa, and James~C. Robinson.
\newblock The effect of noise on the {C}hafee-{I}nfante equation: a nonlinear
  case study.
\newblock {\em Proc. Amer. Math. Soc.}, 135(2):373--382, 2007.

\bibitem{CR04}
Tom{\'a}s Caraballo and James~C. Robinson.
\newblock Stabilisation of linear {PDE}s by {S}tratonovich noise.
\newblock {\em Systems Control Lett.}, 53(1):41--50, 2004.

\bibitem{CJ98}
E.~C{\'e}pa and S.~Jacquot.
\newblock Ergodicit\'e d'in\'egalit\'es variationnelles stochastiques.
\newblock {\em Stochastics Stochastics Rep.}, 63(1-2):41--64, 1998.

\bibitem{C98}
Emmanuel C{\'e}pa.
\newblock Probl\`eme de {S}korohod multivoque.
\newblock {\em Ann. Probab.}, 26(2):500--532, 1998.

\bibitem{CSG11}
M.~D. Chekroun, E.~Simonnet, and M.~Ghil.
\newblock Stochastic climate dynamics: random attractors and time-dependent
  invariant measures.
\newblock {\em Physica D: Nonlinear Phenomena}, 240(21):1685--1700, 2011.

\bibitem{C02}
Igor Chueshov.
\newblock {\em Monotone random systems theory and applications}, volume 1779 of
  {\em Lecture Notes in Mathematics}.
\newblock Springer-Verlag, Berlin, 2002.

\bibitem{CS04}
Igor Chueshov and Michael K.~R. Scheutzow.
\newblock On the structure of attractors and invariant measures for a class of
  monotone random systems.
\newblock {\em Dyn. Syst.}, 19(2):127--144, 2004.

\bibitem{C10}
Igor Chueshov and Bj{\"o}rn Schmalfu{\ss}.
\newblock Master-slave synchronization and invariant manifolds for coupled
  stochastic systems.
\newblock {\em J. Math. Phys.}, 51(10):102702, 23, 2010.

\bibitem{C91}
Hans Crauel.
\newblock Markov measures for random dynamical systems.
\newblock {\em Stochastics Stochastics Rep.}, 37(3):153--173, 1991.

\bibitem{C02-2}
Hans Crauel.
\newblock {\em Random probability measures on {P}olish spaces}, volume~11 of
  {\em Stochastics Monographs}.
\newblock Taylor \& Francis, London, 2002.

\bibitem{CF94}
Hans Crauel and Franco Flandoli.
\newblock Attractors for random dynamical systems.
\newblock {\em Probab. Theory Related Fields}, 100(3):365--393, 1994.

\bibitem{C13}
M.~Crucifix.
\newblock Why could ice ages be unpredictable?
\newblock {\em Climate of the Past}, 9:2253--2267, 2013.

\bibitem{DD79}
Gregorio D{\'{\i}}az and Ildefonso Diaz.
\newblock Finite extinction time for a class of nonlinear parabolic equations.
\newblock {\em Comm. Partial Differential Equations}, 4(11):1213--1231, 1979.

\bibitem{EZ08}
Messoud~A. Efendiev and Sergey~V. Zelik.
\newblock Finite- and infinite-dimensional attractors for porous media
  equations.
\newblock {\em Proc. Lond. Math. Soc. (3)}, 96(1):51--77, 2008.

\bibitem{FGS14}
Franco Flandoli, Benjamin Gess, and Michael Scheutzow.
\newblock Synchronization by noise.
\newblock {\em arXiv:1411.1340}, pages 1--42, 2014.

\bibitem{G13}
Benjamin Gess.
\newblock Random {A}ttractors for {D}egenerate {S}tochastic {P}artial
  {D}ifferential {E}quations.
\newblock {\em J. Dynam. Differential Equations}, 25(1):121--157, 2013.

\bibitem{GCS08}
Michael Ghil, Mickaël~D. Chekroun, and Eric Simonnet.
\newblock Climate dynamics and fluid mechanics: Natural variability and related
  uncertainties.
\newblock {\em Physica D: Nonlinear Phenomena}, 237(14-17):2111--2126, 2008.

\bibitem{H05}
Martin Hairer.
\newblock Ergodicity of stochastic differential equations driven by fractional
  {B}rownian motion.
\newblock {\em Ann. Probab.}, 33(2):703--758, 2005.

\bibitem{H13}
A.~J. Homburg.
\newblock Synchronization in iterated function systems.
\newblock {\em arXiv:1303.6054}, 2013.

\bibitem{JK13}
T.~J\"{a}ger and G.~Keller.
\newblock Random minimality and continuity of invariant graphs in random
  dynamical systems.
\newblock {\em arXiv:1211.5885, to appear in Trans. Amer. Math. Soc.}, 2013.

\bibitem{KKOB77}
Takahiko Kamae, Ulrich Krengel, and George~L. O'Brien.
\newblock Stochastic inequalities on partially ordered spaces.
\newblock {\em Ann. Probability}, 5(6):899--912, 1977.

\bibitem{KJR13}
Gerhard Keller, Haider~H. Jafri, and Ram Ramaswamy.
\newblock Nature of weak generalized synchronization in chaotically driven
  maps.
\newblock {\em Phys. Rev. E}, 87, 2013.

\bibitem{K64}
M.~A. Krasnosel{\cprime}ski{\u\i}.
\newblock {\em Positive solutions of operator equations}.
\newblock Translated from the Russian by Richard E. Flaherty; edited by Leo F.
  Boron. P. Noordhoff Ltd. Groningen, 1964.

\bibitem{A02}
Anna~A. Kwieci{\'n}ska.
\newblock Stabilization of evolution equations by noise.
\newblock {\em Proc. Amer. Math. Soc.}, 130(10):3067--3074 (electronic), 2002.

\bibitem{LPP13}
Vincent Lemaire, Gilles Pag\`{e}s, and Fabien Panloup.
\newblock {I}nvariant distribution of duplicated diffusions and application to
  {R}ichardson-{R}omberg extrapolation.
\newblock {\em arXiv:1302.1651, to appear in Ann. Instit. Henri Poincar\'{e},
  Probab. Stat.}, 2014.

\bibitem{L92}
Torgny Lindvall.
\newblock {\em Lectures on the coupling method}.
\newblock Wiley Series in Probability and Mathematical Statistics: Probability
  and Mathematical Statistics. John Wiley \& Sons, Inc., New York, 1992.
\newblock A Wiley-Interscience Publication.

\bibitem{L09}
Wei Liu.
\newblock Harnack inequality and applications for stochastic evolution
  equations with monotone drifts.
\newblock {\em J. Evol. Equ.}, 9(4):747--770, 2009.

\bibitem{MSS94}
Fabio Martinelli, Luca Sbano, and Elisabetta Scoppola.
\newblock Small random perturbation of dynamical systems: recursive multiscale
  analysis.
\newblock {\em Stochastics Stochastics Rep.}, 49(3-4):253--272, 1994.

\bibitem{MS88}
Fabio Martinelli and Elisabetta Scoppola.
\newblock Small random perturbations of dynamical systems: exponential loss of
  memory of the initial condition.
\newblock {\em Comm. Math. Phys.}, 120(1):25--69, 1988.

\bibitem{N14}
J.~Newman.
\newblock Necessary and sufficient conditions for stable synchronisation in
  random dynamical systems.
\newblock {\em arXiv:1408.5599}, 2014.

\bibitem{O99}
Gunter Ochs.
\newblock Weak random attractors.
\newblock {\em Report 449, Institut f\"ur Dynamische Systeme, Universit\"at
  Bremen,}, pages 1--18, 1999.

\bibitem{O06}
Yoshiki Otobe.
\newblock Stochastic partial differential equations with two reflecting walls.
\newblock {\em J. Math. Sci. Univ. Tokyo}, 13(2):129--144, 2006.

\bibitem{PRKH02}
Arkady Pikovsky, Michael Rosenblum, Jürgen Kurths, and Robert~C. Hilborn.
\newblock Synchronization: A universal concept in nonlinear science.
\newblock {\em American Journal of Physics}, 70(6), 2002.

\bibitem{PR07}
Claudia Pr{\'e}v{\^o}t and Michael R{\"o}ckner.
\newblock {\em A concise course on stochastic partial differential equations},
  volume 1905 of {\em Lecture Notes in Mathematics}.
\newblock Springer, Berlin, 2007.

\bibitem{P96}
K.~Pyragas.
\newblock Weak and strong synchronization of chaos.
\newblock {\em Phys. Rev. E}, 54:R4508--R4511, 1996.

\bibitem{RSTA95}
Nikolai~F. Rulkov, Mikhail~M. Sushchik, Lev~S. Tsimring, and Henry D.~I.
  Abarbanel.
\newblock Generalized synchronization of chaos in directionally coupled chaotic
  systems.
\newblock {\em Phys. Rev. E}, 51:980--994, 1995.

\bibitem{S02}
Michael K.~R. Scheutzow.
\newblock Comparison of various concepts of a random attractor: a case study.
\newblock {\em Arch. Math. (Basel)}, 78(3):233--240, 2002.

\bibitem{SSV14}
R.~Sirovich, L.~Sacerdote, and A.~E.~P. Villa.
\newblock Cooperative behavior in a jump diffusion model for a simple network
  of spiking neurons.
\newblock {\em Math. Biosci. Eng.}, 11(2):385--401, 2014.

\bibitem{T08}
Oliver~M. Tearne.
\newblock Collapse of attractors for {ODE}s under small random perturbations.
\newblock {\em Probab. Theory Related Fields}, 141(1-2):1--18, 2008.

\bibitem{W07}
Feng-Yu Wang.
\newblock Harnack inequality and applications for stochastic generalized porous
  media equations.
\newblock {\em Ann. Probab.}, 35(4):1333--1350, 2007.

\bibitem{YZ14}
Juan Yang and Tusheng Zhang.
\newblock Existence and uniqueness of invariant measures for {SPDE}s with two
  reflecting walls.
\newblock {\em J. Theoret. Probab.}, 27(3):863--877, 2014.

\end{thebibliography}

\end{document}